\documentclass[a4, 12pt]{amsart}
\usepackage[mathscr]{eucal}
\usepackage{amssymb}
\usepackage{latexsym}
\usepackage{amsthm}
\usepackage{color}
\theoremstyle{plain}
\newtheorem{theorem}{Theorem}[section]

\newtheorem{remark}{Remark}[section]

\newcommand{\Ric}{\mathrm{Ric}}

\newtheorem{theo}{Theorem}[section]

	\newtheorem{lem}[theo]{Lemma}
	\newtheorem{prop}[theo]{Proposition}

\newtheorem{claim}{Claim}[section]

\newcommand{\be}{\begin{equation}}
	\newcommand{\ee}{\end{equation}}
\newcommand{\ba}{\begin{eqnarray}}
	\newcommand{\ea}{\end{eqnarray}}
\newcommand{\ban}{\begin{eqnarray*}}
	\newcommand{\ean}{\end{eqnarray*}}

\makeatletter
\@addtoreset{equation}{section}

 \title[Shrinking gradient Ricci soliton]{Rigidity  of shrinking gradient ricci soliton with constant scalar curvature}

\author{Fengjiang Li}
\address[Fengjiang Li]
{Mathematical Science Research Center, Chongqing University of Technology, Chongqing 400054, China}
\email{fengjiangli@cqut.edu.cn}

\author{Yuanyuan Qu}
\address[Yuanyuan Qu]{School of Mathematical Sciences, Shanghai Key Laboratory of PMMP, East China Normal University, Shanghai 200241,
	China}
\email{52285500012@stu.ecnu.edu.cn}

\author{Guoqiang Wu}
\address[Guoqiang Wu]
{School of Science, Zhejiang Sci-Tech University, Hangzhou 310018, China}
\email{gqwu@zstu.edu.cn}

\thanks{\textit{2020 Mathematics Subject Classification.} Primary 53C21; Secondary 53C44.}
\thanks{ \textit{Keywords.} Ricci soliton, compact, Weighted Laplacian}

\begin{document}
\maketitle

\begin{abstract}
Let $(M^n, g, f)$ be a complete shrinking gradient Ricci soliton with constant scalar curvature. Under the assumptions that
	\begin{enumerate}
	\item[(i)] $\Ric \geq \dfrac{\nabla_{\nabla f}\Ric}{f}$ on $M \setminus D$, where $D$ is a compact subset of $M$;
	\item[(ii)] $(M^n, g, f)$ smoothly converges to $\mathbb{R}^2 \times \mathbb{S}^{n-2}$,
\end{enumerate}
we conclude that $(M^n, g, f)$ is isometric to $\mathbb{R}^2 \times \mathbb{S}^{n-2}$. Notably, condition \textup{(i)} is weaker than the radial flatness condition in \cite{Petersen-Wylie2}.
\end{abstract}
\maketitle

\section{Introduction}
Recall that a gradient Ricci soliton is a triple $(M^n, g, f)$ satisfying
\begin{equation}\label{1.1}
	\Ric + \nabla^2 f = \lambda g
\end{equation}
for some constant $\lambda \in \mathbb{R}$. We say that the soliton is shrinking, steady, or expanding according as $\lambda > 0$, $= 0$, or $< 0$, respectively. By rescaling the metric $g$ and changing $f$ by a constant, we may assume that $\lambda \in \{-\frac{1}{2}, 0, \frac{1}{2}\}$.

A gradient Ricci soliton is a self-similar solution to the Ricci flow that evolves by diffeomorphisms and homotheties. The study of solitons has become increasingly important in both the study of the Ricci flow and metric measure theory. Solitons play an essential role as singularity models in the Ricci flow proof of uniformization. Due to the work of Perelman \cite{Perelman2}, Ni--Wallach \cite{Ni-Wallach}, and Cao--Chen--Zhu \cite{Cao-Chen-Zhu}, the classification of three-dimensional shrinking gradient Ricci solitons is complete. For more work on the classification of gradient Ricci solitons under various curvature conditions, see \cite{Brendle1, Cao-Chen, Cao-Chen-Zhu, Cao-Chen2, Cao-Wang-Zhang, Chen-Wang, Eminenti-LaNave-Mantegazza, Kotschwar, Naber, Petersen-Wylie, Pigola-Rimoldi-Setti, Wu-Wu-Wylie, Zhang}.

In this paper, we focus our attention on $n$-dimensional gradient shrinking Ricci solitons with constant scalar curvature. Recall that in Petersen--Wylie \cite{Petersen-Wylie}, a gradient Ricci soliton $(M, g)$ is said to be rigid if it is isometric to a quotient of ${N} \times \mathbb{R}^k$, the product soliton of an Einstein manifold ${N}$ of positive scalar curvature with the Gaussian soliton $\mathbb{R}^k$.

Furthermore, for the complete shrinking case, Professor Huai-Dong Cao conjectured that $(M^n, g, f)$ has constant scalar curvature if and only if it is rigid, i.e., a finite quotient of ${N}^k \times \mathbb{R}^{n-k}$ for some Einstein manifold ${N}$ of positive scalar curvature.

Regarding this conjecture, Fern\'{a}ndez-L\'{o}pez--Garc\'{i}a-R\'{i}o \cite{FR10} and Munteanu--Sesum \cite{Munteanu-Sesum} proved that $n$-dimensional complete gradient shrinking solitons with harmonic Weyl tensor are rigid. Catino--Mastrolia--Monticelli \cite{CMM17} showed that any gradient shrinking Ricci soliton with fourth-order divergence-free Weyl tensor is rigid.

Petersen and Wylie \cite{Petersen-Wylie} proved that a complete gradient Ricci soliton is rigid if and only if it has constant scalar curvature and is radially flat, that is, the sectional curvature $K(\cdot, \nabla f) = 0$. Fern\'{a}ndez-L\'{o}pez and Garc\'{i}a-R\'{i}o \cite{FR16} obtained that the soliton is rigid if and only if the Ricci curvature has constant rank. They also derived the following results for complete $n$-dimensional gradient Ricci solitons satisfying \eqref{1.1} with constant scalar curvature $R$:
(i) The possible values of $R$ are $\{0, \lambda, \dots, (n-1)\lambda, n\lambda\}$.
(ii) If $R$ takes the value $(n-1)\lambda$, then the soliton must be rigid.
(iii) In the shrinking case, there is no complete gradient shrinking Ricci soliton with $R = \lambda$.
(iv) Any $n$-dimensional gradient shrinking Ricci soliton with constant scalar curvature $R = (n-2)\lambda$ has nonnegative Ricci curvature.

Several years ago, Cheng and Zhou \cite{Cheng-Zhou} confirmed Cao's conjecture in dimension $n = 4$. Very recently, the authors gave a simple proof of Cheng--Zhou's result in \cite{Ou-Qu-Wu}. Later, the authors \cite{Li-Ou-Qu-Wu} finished the $5$-dimensional case when the scalar curvature is $\frac{3}{2}$. In this paper, we seek to generalize these results to higher dimensions in a certain sense.

\begin{theorem}\label{thm:main}
	Suppose $(M^n, g, f)$ is a complete shrinking gradient Ricci soliton with constant scalar curvature. Assume that
	\begin{enumerate}
		\item[(i)] $\Ric \geq \dfrac{\nabla_{\nabla f}\Ric}{f}$ on $M \setminus D$, where $D$ is a compact subset of $M$;
		\item[(ii)] $(M^n, g, f)$ smoothly converges to $\mathbb{R}^2 \times \mathbb{S}^{n-2}$.
	\end{enumerate}
	Then $(M^n, g, f)$ is isometric to $\mathbb{R}^2 \times \mathbb{S}^{n-2}$.
\end{theorem}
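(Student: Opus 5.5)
Normalize $\lambda=\tfrac12$, so $\Ric+\nabla^2 f=\tfrac12 g$, and $f$ is normalized so that $R+|\nabla f|^2=f$. Since $R$ is constant and the soliton converges smoothly to $\mathbb{R}^2\times\mathbb{S}^{n-2}$, which has scalar curvature $\tfrac{n-2}{2}$, we get $R=\tfrac{n-2}{2}=(n-2)\lambda$. By result (iv) of Fern\'andez-L\'opez--Garc\'ia-R\'io, $\Ric\ge 0$. The standard identities $\Ric(\nabla f)=\tfrac12\nabla R=0$ and $\Delta_f R=R-2|\Ric|^2$ then give $|\Ric|^2=\tfrac R2$. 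Writing $0\le\lambda_1\le\dots\le\lambda_n$ for the Ricci eigenvalues, this says
\[
\sum_i \lambda_i\Bigl(\tfrac12-\lambda_i\Bigr)=\tfrac R2-|\Ric|^2=0 .
\]
So it suffices to prove $\lambda_n\le \tfrac12$ everywhere. Then each term is nonnegative, hence zero, so every $\lambda_i\in\{0,\tfrac12\}$. Since $\sum\lambda_i=\tfrac{n-2}{2}$, $\Ric$ has constant rank $n-2$. The rigidity theorem of Fern\'andez-L\'opez--Garc\'ia-R\'io (constant rank of $\Ric$ implies rigid) then shows $M$ is a quotient of $N^{n-2}\times\mathbb{R}^2$ with $N$ Einstein. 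Hypothesis (ii) identifies $N$ with $\mathbb{S}^{n-2}$ and rules out a nontrivial quotient, since the asymptotic model is simply $\mathbb{R}^2\times\mathbb{S}^{n-2}$.

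The core step is the upper bound $\lambda_n\le\tfrac12$, and this is where I expect both hypotheses to be needed. I would work with the tensor $T=\Ric/f$ or, more flexibly, $f^{-a}\Ric$ for a suitable power $a$. Using $|\nabla f|^2=f-R$, hypothesis (i) gives on $M\setminus D$
\[
\nabla_{\nabla f}\Bigl(\frac{\Ric}{f}\Bigr)=\frac{\nabla_{\nabla f}\Ric}{f}-\frac{|\nabla f|^2}{f^2}\Ric\le \frac{R}{f^2}\Ric .
\]
Since $R/f^2$ is integrable along integral curves of $\nabla f/|\nabla f|^2$ (where $f$ grows linearly in the parameter), this is a monotonicity inequality along the flow of $\nabla f$. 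Integrating it from a point $x$ out to infinity, and using the smooth convergence in (ii) to pin down the asymptotic values, I aim to bound $\Ric$ at $x$ by its limit $\mathrm{diag}(0,0,\tfrac12,\dots,\tfrac12)$ up to a factor tending to $1$ as $f(x)\to\infty$.

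This gives $\lambda_n\le\tfrac12+o(1)$ at infinity, which then has to be propagated to the compact set $D$. For that I would apply the maximum principle to the largest eigenvalue $\lambda_n$ in the barrier/viscosity sense, using
\[
\Delta_f\Ric=\Ric-2\,\mathrm{Rm}(\Ric).
\]
At an interior maximum of $\lambda_n$ with unit eigenvector $e_n$, the reaction term is $\lambda_n-2\sum_i K(e_n,e_i)\lambda_i$. The constraints $\sum\lambda_i(\tfrac12-\lambda_i)=0$ and $\Ric\ge0$ should force this term to have the wrong sign if $\lambda_n>\tfrac12$. Alternatively, I would use the integrated identity
\[
\int|\nabla \Ric|^2e^{-f}=\int\langle \Ric,2\,\mathrm{Rm}(\Ric)-\Ric\rangle e^{-f},
\]
which follows from $\Delta_f|\Ric|^2=0$, in combination with an algebraic estimate.

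The main obstacle is exactly this interior step: controlling the curvature term $\mathrm{Rm}(\Ric)$ without any pinching assumption. If the maximum-principle route fails, I would instead try to make the flow-line comparison global. Starting from any point and flowing forward along $\nabla f$, every integral curve leaves $D$, since $f$ is proper. The monotone quantity $f^{-a}\Ric$ (with $a$ chosen to absorb the $R/f^2$ error) would then bound $\Ric(x)$ directly by its value at infinity. The remaining difficulty is the initial portion of the curve inside $D$, which would be handled by compactness of $D$ together with continuity.
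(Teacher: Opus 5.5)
\textbf{The core step is not established.} Your reduction is sound: with $\Ric\ge 0$ and $\sum_i\lambda_i(\frac{1}{2}-\lambda_i)=0$, the bound $\lambda_n\le\frac{1}{2}$ forces $\lambda_i\in\{0,\frac{1}{2}\}$. Then $\Ric$ has constant rank $n-2$, and the Fern\'andez-L\'opez--Garc\'{i}a-R\'{i}o rigidity theorem applies. But $\lambda_n\le\frac{1}{2}$ everywhere is essentially equivalent to the conclusion, and your argument for it fails for three reasons.

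\emph{You misread (i) by a factor of $f$.} Hypothesis (i) says $\nabla_{\nabla f}\Ric\le f\,\Ric$. This only gives $\nabla_{\nabla f}(\Ric/f)\le\bigl(1-\frac{f-R}{f^2}\bigr)\Ric$. Your bound $\frac{R}{f^2}\Ric$ would require $\nabla_{\nabla f}\Ric\le\Ric$.

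\emph{The monotonicity points the wrong way.} Hypothesis (i) is a one-sided upper bound on the outward derivative of $\Ric$. With any weight (the one that actually works is roughly $e^{-f}\Ric$), monotonicity along an outward flow line bounds $\Ric$ at later points by $\Ric(x)$ times an exponentially growing factor. Equivalently, it bounds $\Ric(x)$ from below by a quantity tending to zero. It can never bound $\Ric(x)$ from above by the asymptotic value $\mathrm{diag}(0,0,\frac{1}{2},\dots,\frac{1}{2})$. Your global fallback has the same defect, and compactness of $D$ would in any case only give non-sharp constants.

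\emph{The interior step is only asserted.} The asymptotic bound $\lambda_n\le\frac{1}{2}+o(1)$ is just hypothesis (ii). All the content lies in removing the $o(1)$ and in the interior. At a maximum, the reaction term $\lambda_n-2\sum_k K(e_n,e_k)\lambda_k$ depends on how $\sum_k K(e_n,e_k)=\lambda_n$ is distributed among directions with different $\lambda_k$. The constraints $\Ric\ge0$ and $\sum\lambda_i(\frac{1}{2}-\lambda_i)=0$ do not control this distribution. If a purely algebraic argument of this kind worked, hypothesis (i) would be superfluous. Likewise, your integral identity actually holds pointwise, $|\nabla\Ric|^2=2\sum_{\alpha\ne\beta}K_{\alpha\beta}\lambda_\alpha\lambda_\beta-\frac{n-2}{4}$. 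The whole difficulty is exactly the estimate of this sectional-curvature term, which you leave open.

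\textbf{The missing idea is how (i) enters.} The Gauss equations for $\Sigma(s)=f^{-1}(s)$ use $h_{\alpha\beta}=(\frac{1}{2}-\lambda_\alpha)\delta_{\alpha\beta}/\sqrt f$ and $H=1/(2\sqrt f)$. They give $\Ric^{\Sigma}=\Ric-\nabla_{\nabla f}\Ric/f$, so (i) says precisely that the level sets have $\Ric^{\Sigma}\ge0$. The paper then proceeds in four steps.
\begin{enumerate}
\item It uses $\Ric^{\Sigma}\ge 0$ to split $\Sigma(s)$. Together with (ii), it applies a Weyl pinching estimate on the nearly round sphere factor, obtaining $\int_{\Sigma(s)}|W^\Sigma|^2\le C\int_{\Sigma(s)}|\nabla\Ric|^2/f$.
\item It expands $K_{\alpha\beta}$ in the pointwise identity through the Gauss equation and $W^{\Sigma}$. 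This yields $|\nabla\Ric|^2\le-\frac{1}{n-3}(\lambda_1+\lambda_2)+K_{12}+C|W^\Sigma|^2$ far out, where $K_{12}$ is expressed through $\nabla_{\nabla f}$ of the Ricci eigenvalues.
\item It integrates over $\Sigma(s)$. The measure $s^{-1/2}d\sigma_{\Sigma(s)}$ is invariant under the flow of $\nabla f/|\nabla f|^2$, and $s^{-1/2}\int_{\Sigma(s)}(\lambda_1+\lambda_2)\to0$. From this it derives $\int_{\Sigma(s)}\langle\nabla(\lambda_1+\lambda_2),\nabla f\rangle\le 0$, and hence $\nabla\Ric=0$ and $\lambda_1+\lambda_2=0$ outside a compact set.
\item Analyticity and de Rham finish the proof; your constant-rank endgame would also work at this point.
\end{enumerate}
Without some such mechanism for controlling the curvature term, your core step $\lambda_n\le\frac{1}{2}$ remains unproved.
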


\begin{remark}
	It is easy to see that the scalar curvature must be $ (n-2)\lambda$, since $(M^n, g, f)$ smoothly converges to $\mathbb{R}^2 \times \mathbb{S}^{n-2}$.
\end{remark}

\begin{remark}
	Condition \textup{(i)} is weaker than the radial flatness condition in \cite{Petersen-Wylie2}. To see this, we take $\lambda=\frac{1}{2}$ and suppose $\{e_1, \dots, e_n\}$ are the eigenvectors of Ricci curvature corresponding to eigenvalues $\lambda_1 \leq \dots \leq \lambda_n$, where $\lambda_1 = 0$ and $e_1 = \frac{\nabla f}{|\nabla f|}$. Recall that radial flatness means $R(\nabla f, \cdot, \nabla f, \cdot) = 0$. Given $v = \sum_{i=2}^n a_i e_i \in T\Sigma(t)$, where $\Sigma(t) = f^{-1}(t)$, by \eqref{Ricci derivative} we have
	\begin{align*}
		&\Ric(v, v) - \frac{\nabla_{\nabla f}\Ric}{f}(v, v) \\
		&\quad = \Ric(v, v) - \frac{\Ric \circ (\Ric - \frac{1}{2}g)(v, v)}{f} \\
		&\quad = \sum_{i=2}^n a_i^2 \lambda_i - \sum_{i=2}^n \frac{a_i^2 \lambda_i(\lambda_i - \frac{1}{2})}{t} \\
		&\quad = \sum_{i=2}^n a_i^2 \lambda_i\left(1 - \frac{\lambda_i - \frac{1}{2}}{t}\right) \geq 0
	\end{align*}
	provided that $t$ is sufficiently large, where we have used $|\Ric|^2 = \frac{n-2}{4}$ and $\Ric \geq 0$ from Lemma~\ref{ricci nonnegative}.
\end{remark}


\section{Preliminary}

Suppose $(M^n, g, f)$ is a shrinking gradient Ricci soliton satisfying
\begin{equation}\label{soliton-equation}
	\nabla^2 f + \mathrm{Ric} = \frac{1}{2}g
\end{equation}
with constant scalar curvature.

We first recall some basic formulas that will be used throughout this paper:
\begin{align}
	&dR = 2\,\mathrm{Ric}(\nabla f), \label{second-bianchi}\\
	&R + \Delta f = \frac{n}{2}, \label{trace-soliton}\\
	&R + |\nabla f|^2 = f, \label{f-gradient}\\
	&\Delta_f R = R - 2|\mathrm{Ric}|^2, \label{drift-laplacian-scalar}\\
	&\Delta_f R_{ij} = R_{ij} - 2R_{ikjl}R_{kl}, \label{elliptic-equation}
\end{align}
where $\Delta_f \mathrm{Ric} = \Delta \mathrm{Ric} - \nabla_{\nabla f}\mathrm{Ric}$ in the last formula.

In our setting, since the scalar curvature is constant, the potential function $f$ can be renormalized by replacing $f - R$ with $f$, so that $f: M \to [0, +\infty)$ satisfies
\begin{equation}\label{iso1}
	|\nabla f|^2 = f.
\end{equation}

Since $\mathrm{Ric}(\nabla f) = \frac{1}{2}dR = 0$, we choose $e_1 = \frac{\nabla f}{|\nabla f|}$. Let $\{e_1, e_2, e_3, \ldots, e_n\}$ be a local orthonormal frame at any point $p \in M \setminus f^{-1}(0)$ such that $R_{\alpha\beta} = \mathrm{Ric}(e_\alpha, e_\beta) = \lambda_\alpha \delta_{\alpha\beta}$. Denote $\Sigma(s) = f^{-1}(s)$; when clear from the context, we do not distinguish between $\Sigma(s)$ and $\Sigma$.

Recall that the intrinsic curvature tensor $R^{\Sigma}_{\alpha\beta\gamma\eta}$ and the extrinsic curvature $R_{\alpha\beta\gamma\eta}$ of $p \in \Sigma$ (where $\{\alpha,\beta,\gamma,\eta\} \subset \{2, \ldots, n\}$) are related by the Gauss equations:
\begin{equation*}\label{gauss-equation}
	R^{\Sigma}_{\alpha\beta\gamma\eta} = R_{\alpha\beta\gamma\eta} + h_{\alpha\gamma}h_{\beta\eta} - h_{\alpha\eta}h_{\beta\gamma},
\end{equation*}
where $h_{\alpha\beta}$ denote the components of the second fundamental form of $\Sigma$, i.e.,
\begin{equation*}\label{second-fundamental-form}
	h_{\alpha\beta} = \frac{\frac{1}{2} - \lambda_\alpha}{\sqrt{f}}\delta_{\alpha\beta}.
\end{equation*}
Moreover,
\begin{equation*}\label{ricci-level-set}
	R^{\Sigma}_{\alpha\gamma} = R_{\alpha\gamma} - R_{\alpha 1\gamma 1} + Hh_{\alpha\gamma} - h_{\alpha\beta}h_{\beta\gamma},
\end{equation*}
and the scalar curvature $R^{\Sigma}$ of $\Sigma$ satisfies
\begin{equation*}\label{scalar-level-set}
	R^{\Sigma} = R - 2R_{11} + H^2 - |A|^2.
\end{equation*}
Since $R = \frac{n-2}{2}$, $\mathrm{Ric}(\nabla f) = 0$, and $R_{1i} = 0$ for $i = 1, \ldots, n$, we have
\begin{equation*}\label{scalar-simplified}
	R^{\Sigma} = R + H^2 - |A|^2.
\end{equation*}

Since the mean curvature $H = \frac{\frac{n-1}{2} - \sum_{\alpha=2}^n \lambda_\alpha}{\sqrt{f}} = \frac{1}{2\sqrt{f}}$, we obtain
\begin{align*}
	R^{\Sigma} &= R + H^2 - |A|^2 \notag\\
	&= \frac{n-2}{2} + \frac{1}{4f} - \frac{1}{f}\sum_{\alpha=2}^n\left(\frac{1}{2} - \lambda_\alpha\right)^2 \notag\\
	&= \frac{n-2}{2} + \frac{1}{4f} - \frac{1}{f}\left(\frac{n-1}{4} - \sum_{\alpha=2}^n\lambda_\alpha + \sum_{\alpha=2}^n\lambda_\alpha^2\right) \notag\\
	&= \frac{n-2}{2} + \frac{1}{4f} - \frac{1}{f}\left(\frac{n-1}{4} - \frac{n-2}{2} + \frac{n-2}{4}\right) \notag\\
	&= \frac{n-2}{2}. \label{scalar-final}
\end{align*}
In the above, we have used $\lambda_1 = 0$, $R = \frac{n-2}{2}$, and $|\mathrm{Ric}|^2 = \frac{n-2}{4}$.

Recall the decomposition of the Riemann curvature tensor:
\begin{equation*}\label{riemann-decomposition}
	R_{ijkl} = W_{ijkl} + \frac{1}{n-2}(g_{ik}R_{jl} - g_{il}R_{jk} - g_{jk}R_{il} + g_{jl}R_{ik}) - \frac{1}{(n-1)(n-2)}R(g_{ik}g_{jl} - g_{il}g_{jk}),
\end{equation*}
we obtain
\begin{equation*}
\begin{aligned}
	& R^{\Sigma}_{\alpha\beta\alpha\beta} \notag\\
	=& W^{\Sigma}_{\alpha\beta\alpha\beta} + \frac{1}{n-3}(R^{\Sigma}_{\alpha\alpha} + R^{\Sigma}_{\beta\beta}) - \frac{1}{(n-2)(n-3)}R^{\Sigma} \notag\\
	=& W^{\Sigma}_{\alpha\beta\alpha\beta} + \frac{1}{n-3}\left(R_{\alpha\alpha} - R_{\alpha 1\alpha 1} + Hh_{\alpha\alpha} - h_{\alpha\alpha}^2 + R_{\beta\beta} - R_{\beta 1\beta 1} + Hh_{\beta\beta} - h_{\beta\beta}^2\right) - \frac{1}{2(n-3)} \notag\\
	=& W^{\Sigma}_{\alpha\beta\alpha\beta} + \frac{1}{n-3}\left(\lambda_{\alpha} + \lambda_{\beta} - R_{\alpha 1\alpha 1} - R_{\beta 1\beta 1} + H(h_{\alpha\alpha} + h_{\beta\beta}) - h_{\alpha\alpha}^2 - h_{\beta\beta}^2\right) - \frac{1}{2(n-3)}. \label{sectional-level-set}
\end{aligned}
\end{equation*}
Using the Gauss equation together with $h_{\alpha\beta} = 0$ for $\alpha \neq \beta$, we have
\begin{equation*}
	\begin{aligned}
	& R_{\alpha\beta\alpha\beta} \notag\\
	= &R^{\Sigma}_{\alpha\beta\alpha\beta} - h_{\alpha\alpha}h_{\beta\beta} + h_{\alpha\beta}^2 \notag\\
	=& \frac{1}{n-3}\left(\lambda_{\alpha} + \lambda_{\beta} - R_{\alpha 1\alpha 1} - R_{\beta 1\beta 1} + H(h_{\alpha\alpha} + h_{\beta\beta}) - h_{\alpha\alpha}^2 - h_{\beta\beta}^2\right) - \frac{1}{2(n-3)} \notag\\
	&\quad - h_{\alpha\alpha}h_{\beta\beta} + W^{\Sigma}_{\alpha\beta\alpha\beta} \notag\\
	=& \frac{1}{n-3}(\lambda_{\alpha} + \lambda_{\beta}) - \frac{1}{2(n-3)} - \frac{1}{n-3}(R_{\alpha 1\alpha 1} + R_{\beta 1\beta 1}) \notag\\
	& + \frac{1}{2(n-3)f}\left[\left(\frac{1}{2} - \lambda_{\alpha}\right) + \left(\frac{1}{2} - \lambda_{\beta}\right)\right] \notag\\
	&- \frac{\left(\frac{1}{2} - \lambda_{\alpha}\right)^2 + \left(\frac{1}{2} - \lambda_{\beta}\right)^2}{(n-3)f} - \frac{\left(\frac{1}{2} - \lambda_{\alpha}\right)\left(\frac{1}{2} - \lambda_{\beta}\right)}{f} + W^{\Sigma}_{\alpha\beta\alpha\beta}. \label{sectional-final}
\end{aligned}
\end{equation*}

\begin{claim}
\begin{equation}\label{ricci-curvature-level-set}
	\mathrm{Ric}^{\Sigma} = \mathrm{Ric} - \frac{\nabla_{\nabla f}\mathrm{Ric}}{f}.
\end{equation}
\end{claim}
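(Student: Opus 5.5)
The plan is to compare the two sides componentwise in the frame $\{e_1,\dots,e_n\}$, for tangential indices $\alpha,\gamma\in\{2,\dots,n\}$. I would reduce both sides to the same expression in terms of $\lambda_\alpha$ and the radial curvature $R_{\alpha1\gamma1}$. For the left-hand side, start from the contracted Gauss formula already recorded,
\[
R^{\Sigma}_{\alpha\gamma}=R_{\alpha\gamma}-R_{\alpha1\gamma1}+Hh_{\alpha\gamma}-h_{\alpha\beta}h_{\beta\gamma},
\]
and insert $h_{\alpha\beta}=\frac{\frac12-\lambda_\alpha}{\sqrt f}\delta_{\alpha\beta}$ and $H=\frac{1}{2\sqrt f}$. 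This gives
\[
Hh_{\alpha\gamma}-h_{\alpha\beta}h_{\beta\gamma}=\frac{1}{f}\Big(\tfrac12(\tfrac12-\lambda_\alpha)-(\tfrac12-\lambda_\alpha)^2\Big)\delta_{\alpha\gamma}=-\frac{\lambda_\alpha(\lambda_\alpha-\frac12)}{f}\delta_{\alpha\gamma}.
\]
Hence
\[
R^\Sigma_{\alpha\gamma}=R_{\alpha\gamma}-\frac{1}{f}\Big(\lambda_\alpha(\lambda_\alpha-\tfrac12)\delta_{\alpha\gamma}+fR_{\alpha1\gamma1}\Big).
\]
It therefore suffices to prove the identity
\begin{equation*}
(\nabla_{\nabla f}\mathrm{Ric})_{\alpha\gamma}=\big(\mathrm{Ric}\circ(\mathrm{Ric}-\tfrac12 g)\big)_{\alpha\gamma}+R(\nabla f,e_\alpha,\nabla f,e_\gamma),
\end{equation*}
using $|\nabla f|^2=f$ from \eqref{iso1}, so that $R(\nabla f,e_\alpha,\nabla f,e_\gamma)=fR_{\alpha1\gamma1}$.

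To get this identity I would use two ingredients.

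First, since $R$ is constant, \eqref{second-bianchi} gives $R_{jl}f_l=0$ everywhere. Differentiating in $e_k$ and using \eqref{soliton-equation}, $f_{lk}=\frac12 g_{lk}-R_{lk}$, yields
\[
(\nabla_kR_{jl})f_l=-R_{jl}\big(\tfrac12\delta_{lk}-R_{lk}\big)=\big(\mathrm{Ric}\circ(\mathrm{Ric}-\tfrac12g)\big)_{jk}.
\]

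Second, I would commute third derivatives of $f$. From \eqref{soliton-equation}, $\nabla_lf_{jk}=-\nabla_lR_{jk}$, so
\[
\nabla_kR_{jl}-\nabla_lR_{jk}=\nabla_lf_{jk}-\nabla_kf_{jl}=\pm R_{lkji}f_i .
\]
Contracting with $f_l$ and combining with the first ingredient gives
\[
f_l\nabla_lR_{jk}=\big(\mathrm{Ric}\circ(\mathrm{Ric}-\tfrac12g)\big)_{jk}\pm R(\nabla f,e_j,\nabla f,e_k).
\]

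The main obstacle is purely the sign convention for the curvature tensor in the Ricci identity. The sign must be chosen consistently with the convention $R_{\alpha1\alpha1}$ used in the Gauss equation above, so that $R_{\alpha1\alpha1}$ is the sectional curvature $K(e_\alpha,e_1)$. As a sanity check, I would verify the sign on a model: on $\mathbb{R}^2\times\mathbb{S}^{n-2}$ both $\nabla\mathrm{Ric}$ and the radial curvature vanish. I would also check that the trace over $\alpha$ is consistent. Taking the trace over all indices gives
\[
\nabla_{\nabla f}R=0=|\mathrm{Ric}|^2-\tfrac12R\pm\mathrm{Ric}(\nabla f,\nabla f),
\]
and this holds because $\mathrm{Ric}(\nabla f)=0$ and $|\mathrm{Ric}|^2=\frac{R}{2}$, which follows from \eqref{drift-laplacian-scalar} with $R$ constant.

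With the correct sign, the displayed formula for $R^\Sigma_{\alpha\gamma}$ coincides with $R_{\alpha\gamma}-\frac{(\nabla_{\nabla f}\mathrm{Ric})_{\alpha\gamma}}{f}$, which is \eqref{ricci-curvature-level-set} restricted to $T\Sigma$.
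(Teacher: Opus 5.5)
Your route is the same as the paper's. You substitute $H=\frac{1}{2\sqrt f}$ and $h_{\alpha\beta}$ into the contracted Gauss formula; your value $Hh_{\alpha\gamma}-h_{\alpha\beta}h_{\beta\gamma}=\frac{\lambda_\alpha(\frac12-\lambda_\alpha)}{f}\delta_{\alpha\gamma}$ is correct. You then eliminate $R_{\alpha1\gamma1}$ using \eqref{Ricci derivative}. The paper only quotes \eqref{Ricci derivative}, whereas you derive it. Your two ingredients, differentiating $\mathrm{Ric}(\nabla f)\equiv 0$ and commuting covariant derivatives of $\nabla^2 f$, are exactly what is needed.

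\textbf{The gap: the sign of the curvature term.} You leave this sign as $\pm$, but it cannot be chosen. Your reduction shows that \eqref{ricci-curvature-level-set} is equivalent to the identity with the $+$ sign; with $-$ it fails whenever $K(e_1,e_\alpha)\neq 0$. Neither of your proposed checks can detect the sign:
\begin{itemize}
\item On $\mathbb{R}^2\times\mathbb{S}^{n-2}$ every term vanishes. This includes $\mathrm{Ric}\circ(\mathrm{Ric}-\frac12 g)$, since the eigenvalues are $0$ and $\frac12$.
\item In the full trace, the curvature term is $\mathrm{Ric}(\nabla f,\nabla f)=0$.
\end{itemize}

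You have to compute the sign. Take $\mathcal{R}(X,Y)=\nabla_X\nabla_Y-\nabla_Y\nabla_X-\nabla_{[X,Y]}$, so that $\langle\mathcal{R}(X,Y)Y,X\rangle$ is the unnormalized sectional curvature. Then
\[
(\nabla_X\nabla^2 f)(Y,Z)-(\nabla_Y\nabla^2 f)(X,Z)=\langle\mathcal{R}(X,Y)\nabla f,Z\rangle .
\]
Using $\nabla^2 f=\frac12 g-\mathrm{Ric}$, your first ingredient, and skew-adjointness of $\mathcal{R}(X,Y)$, this gives
\[
(\nabla_{\nabla f}\mathrm{Ric})(Y,Z)=\bigl(\mathrm{Ric}\circ(\mathrm{Ric}-\tfrac12 g)\bigr)(Y,Z)+\langle\mathcal{R}(\nabla f,Y)Z,\nabla f\rangle .
\]
Moreover, $\langle\mathcal{R}(\nabla f,e_\alpha)e_\alpha,\nabla f\rangle=f\,K(e_1,e_\alpha)$. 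So the sign is $+$ under the sectional-curvature-positive convention used in the Gauss formula. This agrees with \eqref{Ricci derivative} and Lemma~\ref{sectional-curvature}.

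Alternatively, you can calibrate the commutation formula on a model where the curvature term does not vanish. For example, use the unit sphere with a first eigenfunction, $\nabla^2 u=-u\,g$. With this computation inserted, your argument is complete.
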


Since on a Ricci shrinker with constant scalar curvature,
\begin{equation}\label{Ricci derivative}
	\nabla_{\nabla f}\mathrm{Ric} = \mathrm{Ric} \circ \left(\mathrm{Ric} - \frac{1}{2}g\right) + R(\nabla f, \cdot, \nabla f, \cdot),
\end{equation}
we have
\begin{equation}\label{R1a1b}
	R(e_1, e_\alpha, e_1, e_\beta) = \frac{\nabla_{\nabla f}R_{\alpha\beta} + \left(\frac{1}{2}R_{\alpha\beta} - \sum_{k=1}^n R_{\alpha k}R_{k\beta}\right)}{f}.
\end{equation}
Finally,
\begin{equation*}\label{ricci-level-set-proof}
	\begin{aligned}
		R^{\Sigma}_{\alpha\beta} =& R_{\alpha\beta} - \frac{\nabla_{\nabla f}R_{\alpha\beta} + \left(\frac{1}{2}R_{\alpha\beta} - R_{\alpha k}R_{k\beta}\right)}{f} + \frac{\frac{1}{2} - \lambda_\alpha}{2f}\delta_{\alpha\beta}\\
		& - \frac{\left(\frac{1}{2} - \lambda_\alpha\right)\left(\frac{1}{2} - \lambda_\beta\right)}{f}\delta_{\alpha\gamma}\delta_{\gamma\beta} \\
		=& R_{\alpha\beta} - \frac{\nabla_{\nabla f}R_{\alpha\beta}}{f} + \frac{1}{f}\left[-\lambda_\alpha\left(\frac{1}{2} - \lambda_\alpha\right) + \frac{1}{2}\left(\frac{1}{2} - \lambda_\alpha\right) - \left(\frac{1}{2} - \lambda_\alpha\right)^2\right]\delta_{\alpha\beta} \\
		=& R_{\alpha\beta} - \frac{\nabla_{\nabla f}R_{\alpha\beta}}{f}.
	\end{aligned}
\end{equation*}

\begin{lem}\label{ricci nonnegative}
	Let $(M^n, g, f)$ be an $n$-dimensional shrinking gradient Ricci soliton with constant scalar curvature $\frac{n-2}{2}$. Then $\mathrm{Ric} \geq 0$.
\end{lem}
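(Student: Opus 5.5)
The plan is to reduce the statement to pure linear algebra on the eigenvalues of $\mathrm{Ric}$, using the two pointwise identities available when $R$ is constant.

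\emph{Step 1: the two trace identities.} Since $R\equiv\frac{n-2}{2}$ is constant, $\Delta_f R=0$, and \eqref{drift-laplacian-scalar} gives
\[
|\mathrm{Ric}|^2=\tfrac{R}{2}=\tfrac{n-2}{4}
\]
everywhere on $M$. By \eqref{second-bianchi}, $\mathrm{Ric}(\nabla f)=\frac12 dR=0$. Hence at any point with $\nabla f\neq 0$ we may take $e_1=\nabla f/|\nabla f|$, so that $\lambda_1=0$. The remaining eigenvalues $\lambda_2,\dots,\lambda_n$, which are $m:=n-1$ numbers, then satisfy
\[
\sum_{\alpha=2}^n\lambda_\alpha=\tfrac{n-2}{2}=:s,\qquad \sum_{\alpha=2}^n\lambda_\alpha^2=\tfrac{n-2}{4}=:q .
\]

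\emph{Step 2: the sharp lower bound for the smallest eigenvalue.} Fix $\beta\in\{2,\dots,n\}$. Applying Cauchy--Schwarz to the other $m-1$ eigenvalues gives
\[
(s-\lambda_\beta)^2\le (m-1)(q-\lambda_\beta^2).
\]
Rearranged, this is the quadratic inequality
\[
m\lambda_\beta^2-2s\lambda_\beta+s^2-(m-1)q\le 0 .
\]
It forces $\lambda_\beta$ to lie between the two roots:
\[
\lambda_\beta\ \ge\ \frac{s}{m}-\sqrt{\frac{m-1}{m}\Bigl(q-\frac{s^2}{m}\Bigr)} .
\]
Now substitute $m=n-1$, $s=\frac{n-2}{2}$, $q=\frac{n-2}{4}$. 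First,
\[
q-\frac{s^2}{m}=\frac{n-2}{4(n-1)} .
\]
Multiplying by $\frac{m-1}{m}=\frac{n-2}{n-1}$ gives $\frac{(n-2)^2}{4(n-1)^2}$, whose square root is $\frac{n-2}{2(n-1)}$. This equals $\frac{s}{m}$ exactly, so the lower bound is $0$. Therefore $\lambda_\beta\ge0$ for every $\beta$. Together with $\lambda_1=0$, this gives $\mathrm{Ric}\ge0$ at every point where $\nabla f\ne0$.

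\emph{Step 3: the critical set.} The set $\{\nabla f=0\}$ is handled by continuity of $\mathrm{Ric}$. By \eqref{iso1} this set is $f^{-1}(0)$. It has empty interior: otherwise $\nabla^2 f=0$ on an open set, which would give $\mathrm{Ric}=\frac12 g$ there and hence $R=\frac n2\neq\frac{n-2}{2}$. So $\mathrm{Ric}\ge0$ extends to all of $M$ as a limit of nonnegative forms.

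There is no real analytic obstacle here. The only points needing care are two. First, one must notice that $\mathrm{Ric}(\nabla f)=0$ removes one eigenvalue, so that Cauchy--Schwarz is applied to $n-1$ eigenvalues rather than $n$; with $n$ eigenvalues the bound would come out negative. Second, one must verify that the extremal configuration (one eigenvalue $0$, the rest equal to $\frac12$) makes the estimate exactly sharp, which the arithmetic in Step 2 confirms.
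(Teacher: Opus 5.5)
Your proof is correct and complete. The paper gives no argument of its own here; it simply defers to Fern\'andez-L\'opez--Garc\'ia-R\'io \cite{FR16}. What you supply is a self-contained pointwise proof: $|\mathrm{Ric}|^2=\frac{n-2}{4}$ from \eqref{drift-laplacian-scalar}, the null direction $\nabla f$ from \eqref{second-bianchi}, a sharp Cauchy--Schwarz bound on the remaining $n-1$ eigenvalues, and a density argument on the critical set. Your check that $\{\nabla f=0\}$ has empty interior, because $R\neq\frac n2$, is exactly what is needed there, and the paper's frame conventions leave this point implicit.

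For comparison, a slightly shorter route is already implicit in the paper's proof of Lemma \ref{lambda-estimates}. The same two trace identities give $\sum_{\alpha=2}^n(\frac12-\lambda_\alpha)^2=\frac{n-1}{4}-R+|\mathrm{Ric}|^2=\frac14$. Hence each $|\frac12-\lambda_\alpha|\le\frac12$, i.e.\ $0\le\lambda_\alpha\le 1$, with no quadratic to solve. Your Cauchy--Schwarz version also uses the fact that the other $n-2$ eigenvalues share the remaining trace, so it additionally yields the sharper upper bound $\lambda_\alpha\le\frac{n-2}{n-1}$, which is not needed for the lemma. The lower bound $0$ is the same in both approaches and is sharp, being attained on $\mathbb{R}^2\times\mathbb{S}^{n-2}$.
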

\begin{proof}
	This fact is known in the literature; for more details, see \cite{FR16}.
\end{proof}

\begin{lem}\label{lambda-estimates}
	Let $(M^n, g, f)$ be an $n$-dimensional shrinking gradient Ricci soliton with constant scalar curvature $\frac{n-2}{2}$. Then
	\begin{equation}\label{la}
		\left(\frac{1}{2} - \lambda_\alpha\right)^2 \leq \sum_{\alpha=3}^n\left(\frac{1}{2} - \lambda_\alpha\right)^2 = \lambda_2(1 - \lambda_2)
	\end{equation}
	for $\alpha = 3, \ldots, n$;
	\begin{equation}\label{l345}
		\sum_{\alpha=3}^n\left(\frac{1}{2} - \lambda_\alpha\right)^2\lambda_\alpha \leq \lambda_2(1 - \lambda_2)\left(\frac{n-2}{2} - \lambda_2\right);
	\end{equation}
	\begin{equation}\label{l345'}
		\sum_{\alpha=3}^n\left(\frac{1}{2} - \lambda_\alpha\right)^2\lambda_\alpha^2 \leq \lambda_2(1 - \lambda_2)\left(\frac{n-2}{4} - \lambda_2^2\right).
	\end{equation}
\end{lem}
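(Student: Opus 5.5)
The plan is to rewrite everything in terms of the shifted quantities $\mu_\alpha:=\frac12-\lambda_\alpha$ for $\alpha=2,\dots,n$. These quantities are exactly (up to the factor $\sqrt f$) the principal curvatures of $\Sigma$. The three estimates should then follow from two trace identities, the nonnegativity of $\mathrm{Ric}$, and the elementary fact that a single nonnegative term is bounded by the sum of all such terms.

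\emph{Step 1: the two trace identities.} We have $\lambda_1=0$ since $\mathrm{Ric}(\nabla f)=0$, and $R=\frac{n-2}{2}$. Hence $\sum_{\alpha=2}^n\lambda_\alpha=\frac{n-2}{2}$. Because $R$ is constant, \eqref{drift-laplacian-scalar} gives $0=R-2|\mathrm{Ric}|^2$, so
\[
\sum_{\alpha=2}^n\lambda_\alpha^2=|\mathrm{Ric}|^2=\frac{n-2}{4}.
\]
Substituting these into the definition of $\mu_\alpha$ gives
\[
\sum_{\alpha=2}^n\mu_\alpha=\frac{n-1}{2}-\frac{n-2}{2}=\frac12,\qquad
\sum_{\alpha=2}^n\mu_\alpha^2=\frac{n-1}{4}-\frac{n-2}{2}+\frac{n-2}{4}=\frac14 .
\]
This is the same computation already carried out for $H$ and $R^\Sigma$ in the preliminaries.

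\emph{Step 2: proof of \eqref{la}.} Separate off the $\alpha=2$ term:
\[
\sum_{\alpha=3}^n\mu_\alpha^2=\frac14-\Bigl(\frac12-\lambda_2\Bigr)^2=\lambda_2-\lambda_2^2=\lambda_2(1-\lambda_2).
\]
Each individual term $\mu_\alpha^2$ with $\alpha\ge3$ is nonnegative, so it is bounded by the whole sum. This gives both parts of \eqref{la}.

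\emph{Step 3: proof of \eqref{l345} and \eqref{l345'}.} By Lemma~\ref{ricci nonnegative}, every $\lambda_\beta\ge 0$. Therefore, for each fixed $\alpha\ge3$,
\[
\lambda_\alpha\le\sum_{\beta=3}^n\lambda_\beta=\frac{n-2}{2}-\lambda_2
\quad\text{and}\quad
\lambda_\alpha^2\le\sum_{\beta=3}^n\lambda_\beta^2=\frac{n-2}{4}-\lambda_2^2 .
\]
Multiply each bound by $\mu_\alpha^2\ge0$ and sum over $\alpha=3,\dots,n$. Then use the identity $\sum_{\alpha\ge3}\mu_\alpha^2=\lambda_2(1-\lambda_2)$ from Step 2. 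This yields \eqref{l345} and \eqref{l345'} respectively.

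\emph{Main obstacle.} There is no real obstacle here. The only points requiring care are the trace identity $|\mathrm{Ric}|^2=\frac{n-2}{4}$, obtained from the constancy of $R$, and the use of Lemma~\ref{ricci nonnegative}. The latter is what justifies bounding a single $\lambda_\alpha$, or $\lambda_\alpha^2$, by the sum over $\beta\ge3$.
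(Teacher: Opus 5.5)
Your proof is correct and follows the paper's argument. You use the same two trace identities, with $|\mathrm{Ric}|^2=\frac{n-2}{4}$ coming from \eqref{drift-laplacian-scalar}, to get $\sum_{\alpha\ge 2}(\frac12-\lambda_\alpha)^2=\frac14$, and then split off $\alpha=2$ to obtain \eqref{la}. The only cosmetic difference is in \eqref{l345} and \eqref{l345'}: you bound each $\lambda_\alpha$ (resp.\ $\lambda_\alpha^2$) by its sum over $\beta\ge 3$, while the paper bounds each $(\frac12-\lambda_\alpha)^2$ by $\lambda_2(1-\lambda_2)$; both give the same product bound, and \eqref{l345'} does not actually need $\mathrm{Ric}\ge 0$ in either version.
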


\begin{proof}
	It follows from \eqref{drift-laplacian-scalar} that
	\begin{equation*}\label{sum-identity}
		\begin{aligned}
			0 &= \sum_{\alpha=2}^n\lambda_\alpha\left(\frac{1}{2} - \lambda_\alpha\right) \\
			&= -\sum_{\alpha=2}^n\left[\left(\frac{1}{2} - \lambda_\alpha\right)^2 + \frac{1}{2}\left(\frac{1}{2} - \lambda_\alpha\right)\right] \\
			&= -\sum_{\alpha=2}^n\left(\frac{1}{2} - \lambda_\alpha\right)^2 + \frac{1}{4},
		\end{aligned}
	\end{equation*}
	which yields
	\begin{equation*}\label{lambda2-identity}
		\begin{aligned}
			\sum_{\alpha=3}^n\left(\frac{1}{2} - \lambda_\alpha\right)^2 &= \frac{1}{4} - \left(\frac{1}{2} - \lambda_2\right)^2 \\
			&= \lambda_2(1 - \lambda_2).
		\end{aligned}
	\end{equation*}
	Therefore, we have
	\begin{equation*}\label{l345-proof}
		\begin{aligned}
			\sum_{\alpha=3}^n\left(\frac{1}{2} - \lambda_\alpha\right)^2\lambda_\alpha &\leq \lambda_2(1 - \lambda_2)(\lambda_3 + \cdots + \lambda_n) \\
			&= \lambda_2(1 - \lambda_2)\left(\frac{n-2}{2} - \lambda_2\right).
		\end{aligned}
	\end{equation*}
	Similarly, \eqref{l345'} holds.
\end{proof}

\begin{lem}\label{sectional-curvature}
	Let $(M^n, g, f)$ be an $n$-dimensional shrinking gradient Ricci soliton with constant scalar curvature. Then
	\begin{equation}\label{k1a}
		K_{1\alpha} = \frac{\nabla_{\nabla f}R_{\alpha\alpha} + \lambda_\alpha\left(\frac{1}{2} - \lambda_\alpha\right)}{f}
	\end{equation}
	for $\alpha = 2, \ldots, n$.
\end{lem}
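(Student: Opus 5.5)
This is a direct computation. The plan is to read off the $(\alpha,\alpha)$ component of the identity \eqref{Ricci derivative} (equivalently, \eqref{R1a1b}) in the frame $\{e_1,\dots,e_n\}$. In that frame $e_1=\nabla f/|\nabla f|$, and $\mathrm{Ric}$ is diagonal with eigenvalues $\lambda_1=0,\lambda_2,\dots,\lambda_n$.

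The first step is to recall why \eqref{Ricci derivative} holds, since everything rests on it. Differentiate the soliton equation \eqref{soliton-equation} and use the Ricci identity for $\nabla^3 f$:
\[
\nabla_i R_{jk}-\nabla_j R_{ik} = R_{ijkl}\nabla_l f
\]
(with the paper's sign convention). Contract this with $\nabla_i f$, and use that $\nabla_{\nabla f}$ of the symmetric tensor is $\nabla_i f\,\nabla_i R_{jk}$. The term $\nabla_i f\,\nabla_j R_{ik}$ equals $\nabla_j(R_{ik}\nabla_i f) - R_{ik}\nabla_j\nabla_i f$. The first piece vanishes because $\mathrm{Ric}(\nabla f)=\frac12 dR=0$ by \eqref{second-bianchi} and the constancy of $R$. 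The second piece equals $-R_{ik}(\frac12 g_{ij}-R_{ij})$, again by \eqref{soliton-equation}. This yields $\nabla_{\nabla f}\mathrm{Ric}=\mathrm{Ric}\circ(\mathrm{Ric}-\frac12 g)+R(\nabla f,\cdot,\nabla f,\cdot)$.

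The second step is to evaluate this at $(e_\alpha,e_\alpha)$ for $\alpha\ge 2$. Since $\mathrm{Ric}$ is diagonal, $(\mathrm{Ric}\circ\mathrm{Ric})_{\alpha\alpha}=\lambda_\alpha^2$. By \eqref{iso1}, $R(\nabla f,e_\alpha,\nabla f,e_\alpha)=|\nabla f|^2K_{1\alpha}=fK_{1\alpha}$. Hence
\[
\nabla_{\nabla f}R_{\alpha\alpha}=\lambda_\alpha^2-\tfrac12\lambda_\alpha+fK_{1\alpha}.
\]
Solving for $K_{1\alpha}$ on $M\setminus f^{-1}(0)$, where $f>0$ and the frame is defined, gives \eqref{k1a}. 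This is exactly \eqref{R1a1b} with $\beta=\alpha$.

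The only delicate point is bookkeeping of signs: the curvature sign convention, and reading $\nabla_{\nabla f}R_{\alpha\alpha}$ as $(\nabla_{\nabla f}\mathrm{Ric})(e_\alpha,e_\alpha)$ rather than as a derivative of the eigenvalue function. As a consistency check, the sign should be the one that makes $K_{1\alpha}=0$ on the model $\mathbb{R}^2\times\mathbb{S}^{n-2}$. There $\nabla\mathrm{Ric}=0$ and $\lambda_\alpha\in\{0,\frac12\}$, so $\lambda_\alpha(\frac12-\lambda_\alpha)=0$ and the formula indeed gives $K_{1\alpha}=0$.
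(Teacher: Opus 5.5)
Your Step 2 is exactly the paper's proof: the paper reads off the $(\alpha,\alpha)$ entry of \eqref{R1a1b}, i.e.\ of \eqref{Ricci derivative}, which it states without derivation. Your conclusion \eqref{k1a} is correct.

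The one real flaw is in your Step 1, at exactly the sign bookkeeping you flagged. The paper's convention is $K_{\alpha\beta}=R_{\alpha\beta\alpha\beta}$ and $R_{ij}=\sum_k R_{ikjk}$; this is forced by its Weyl decomposition and by Lemma \ref{lem:ricci-gradient}. In that convention the commutation identity reads $\nabla_iR_{jk}-\nabla_jR_{ik}=-R_{ijkl}\nabla_lf=R_{ijlk}\nabla_lf$. The form you wrote holds in the other convention, where $R_{ijji}$ is the sectional curvature.

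Taken literally in the paper's convention, contracting your identity with $\nabla_if$ gives $R(\nabla f,e_j,e_k,\nabla f)=-R(\nabla f,e_j,\nabla f,e_k)$. That yields $\nabla_{\nabla f}R_{\alpha\alpha}=\lambda_\alpha^2-\frac12\lambda_\alpha-fK_{1\alpha}$, the opposite of what you then assert. So you mixed conventions between the identity and its contraction, and the two sign errors happen to cancel.

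You can pin the sign by tracing the identity over $k=i$ for a general shrinker, without assuming $R$ constant. The left side is $-\frac12\nabla_jR$ by the contracted Bianchi identity. On the right, $\sum_iR_{ijil}\nabla_lf=R_{jl}\nabla_lf=\frac12\nabla_jR$ by \eqref{second-bianchi}, so the minus sign is forced.

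Alternatively, avoid indices altogether. Take $X=e_\alpha$ and use $\nabla^2f(\nabla f)=\frac12\nabla f$ (since $\mathrm{Ric}(\nabla f)=0$) to get $\nabla^3f(X,\nabla f,X)=\lambda_\alpha(\frac12-\lambda_\alpha)$. The Ricci identity then gives $\nabla^3f(\nabla f,X,X)-\nabla^3f(X,\nabla f,X)=\langle R(\nabla f,X)\nabla f,X\rangle=-fK_{1\alpha}$, where $R(Y,X)=\nabla_Y\nabla_X-\nabla_X\nabla_Y-\nabla_{[Y,X]}$. Hence $(\nabla_{\nabla f}\mathrm{Ric})(X,X)=-\nabla^3f(\nabla f,X,X)=\lambda_\alpha^2-\frac12\lambda_\alpha+fK_{1\alpha}$, as claimed.

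Your proposed consistency check cannot detect this error. On $\mathbb{R}^2\times\mathbb{S}^{n-2}$, both $\nabla\mathrm{Ric}$ and $\lambda_\alpha(\frac12-\lambda_\alpha)$ vanish, so the formula holds there with either sign in front of the curvature term. A trace test like the one above is a check that actually discriminates.
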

\begin{proof}
	This follows directly from \eqref{R1a1b}.
\end{proof}

\section{Weyl curvature integral pinching}
In this section, we prove that the $L^2$ norm of the Weyl curvature can be controlled by the $L^2$ norm of the traceless Ricci curvature, provided that the metric has constant scalar curvature and is sufficiently close to the standard round metric on $\mathbb{S}^n$.

We first derive the following theorem using elliptic estimates.

\begin{theo}\label{thm:pinching-estimate}
	Suppose $g_c$ is the standard round metric on $\mathbb{S}^n$ with scalar curvature $n(n-1)$. Then there exist constants $C > 0$ and $\delta > 0$ such that if $g$ is any metric on $\mathbb{S}^n$ satisfying $\|g - g_c\|_{C^{100}(g_c)} \leq \delta$, then
	\[
	\int_{\mathbb{S}^n} |\nabla \mathring{Ric}|^2 \, dvol_g \leq C \int_{\mathbb{S}^n} |\mathring{Ric}|^2 \, dvol_g.
	\]
\end{theo}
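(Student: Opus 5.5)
The plan is to reduce the inequality to an elliptic estimate for a gauge-fixed perturbation of $g_c$ and then to compare $\mathring{Ric}$ with the linearized operator.

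\textbf{Step 1 (gauge and scaling).} By Ebin's slice theorem, for $\delta$ small there are a diffeomorphism $\varphi$, $C^{90}$-close to the identity, and a constant $c$ close to $1$ such that $h:=c\,\varphi^*g-g_c$ satisfies the Bianchi gauge $\mathrm{div}_{g_c}h-\tfrac12 d\,\mathrm{tr}_{g_c}h=0$. We also make $h$ $L^2$-orthogonal to the finite-dimensional space of conformal Killing deformations $\{u\,g_c : \Delta u=-nu\}$. Both sides of the inequality are invariant under diffeomorphisms. Under scaling by the constant $c$, the left side scales like $c^{n/2-3}$ and the right side like $c^{n/2-2}$, so the ratio changes only by the factor $c$, which is close to $1$. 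We may therefore work with $\tilde g=g_c+h$, with $\|h\|_{C^{90}}\lesssim\delta$.

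\textbf{Step 2 (linearization and rigidity).} Write $\mathring{Ric}(\tilde g)=Lh+Q(h)$. Here $L$ is the linearized traceless Ricci operator of $g_c$ restricted to the slice, which is elliptic of order two there. The term $Q$ is at least quadratic, with $\|Q(h)\|_{H^k}\lesssim\|h\|_{C^{k+2}}\|h\|_{H^{k+2}}$. The infinitesimal rigidity of the round sphere says the kernel of $L$ on the slice is trivial: trace-free infinitesimal Einstein deformations of $\mathbb S^n$ vanish, and the conformal directions $u\,g_c$ with $\Delta u=-nu$ have been removed. This gives $\|h\|_{H^{k+2}}\le C_k\|Lh\|_{H^k}$. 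Absorbing $Q$ using the $C^{90}$-smallness yields
\[
\|h\|_{H^{2}}\le C\|\mathring{Ric}\|_{L^2},\qquad \|h\|_{H^{3}}\le C\|\mathring{Ric}\|_{H^1}.
\]

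\textbf{Step 3 (closing the estimate; the main obstacle).} The difficulty is that $\|\nabla\mathring{Ric}\|_{L^2}\lesssim\|h\|_{H^3}$ costs one derivative more than $\|\mathring{Ric}\|_{L^2}$ controls, so Step 2 alone is circular. The first thing I would try is to recover this derivative by interpolation, using the $C^{100}$ closeness:
\[
\|h\|_{H^3}\le \|h\|_{H^2}^{1-\theta}\|h\|_{H^{100}}^{\theta}.
\]
To bound the right side by $\|h\|_{H^2}$ one needs $\|h\|_{H^{100}}\lesssim\|h\|_{H^2}$. \emph{This is exactly where I expect the argument to fail as stated.} Take $g=e^{2u}g_c$ with $u=aY_k$, where $Y_k$ is a spherical harmonic of degree $k$. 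Then $\mathring{Ric}\approx-(n-2)\,\mathring{\nabla^2}u$, so $\|\mathring{Ric}\|_{L^2}\sim ak^2$ while $\|\nabla\mathring{Ric}\|_{L^2}\sim ak^3$. Meanwhile $\|g-g_c\|_{C^{100}}\sim ak^{100}\le\delta$ holds once $a\le\delta k^{-100}$. The ratio of the two sides is therefore of order $k$, which is unbounded.

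So I would first verify this example carefully. If it holds up, the inequality cannot hold with a constant independent of $g$ under $C^{100}$-closeness alone. In that case the proof of the statement as worded cannot be completed by this route, and the place to look is the precise hypothesis the subsequent section actually uses. If the example fails, Steps 1--2 together with the interpolation bound in Step 3 constitute the proof.
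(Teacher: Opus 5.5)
Your counterexample is correct, so the statement is false as written for $n\ge 3$. (When $n=2$ one has $\mathring{Ric}\equiv 0$ and it is trivial.) No route can prove it.

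To make the example airtight, fix $k\ge 2$ and a nonzero degree-$k$ spherical harmonic $Y_k$, and set $g_a=e^{2aY_k}g_c$. The conformal change formula gives exactly
$\mathring{Ric}_{g_a}=-(n-2)\bigl(a\,T_k-a^2(dY_k\otimes dY_k)^{\circ}\bigr)$,
where $T_k=\mathring{\nabla^2}Y_k\neq 0$ and $(\cdot)^{\circ}$ denotes the trace-free part. On an Einstein manifold, $\Delta_L(\nabla^2u)=\nabla^2(\nabla^*\nabla u)$ and $\Delta_L(ug)=(\nabla^*\nabla u)g$. On trace-free tensors of $(\mathbb{S}^n,g_c)$ one has $\Delta_L=\nabla^*\nabla+2n$. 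Hence $\nabla^*\nabla T_k=\mu_kT_k$ with $\mu_k=k(k+n-1)-2n$. Letting $a\to 0$ with $k$ fixed gives $\|g_a-g_c\|_{C^{100}(g_c)}\to 0$, and
\[
\frac{\int|\nabla\mathring{Ric}_{g_a}|^2\,dvol_{g_a}}{\int|\mathring{Ric}_{g_a}|^2\,dvol_{g_a}}\longrightarrow \mu_k ,
\]
which is unbounded in $k$. Fixing $k$ before sending $a\to 0$ makes the nonlinear terms and the exact size of $\|Y_k\|_{C^{100}}$ irrelevant. So your heuristic $a\lesssim\delta k^{-100}$ needs no further checking.

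The example also shows exactly where the paper's proof fails. Its first step, $\int|\nabla\mathring{Ric}|^2\le\|\mathring{Ric}\|_{L^2}\|\Delta\mathring{Ric}\|_{L^2}$, is fine. The next inequality, $\int\langle\mathring{Ric},\Delta^2\mathring{Ric}\rangle\le C\|\mathring{Ric}\|_{L^2}(\|\mathring{Ric}\|_{L^2}+\|\Delta\mathring{Ric}\|_{L^2})$, is attributed to the elliptic $W^{2,2}$ estimate. But elliptic regularity bounds higher derivatives by the operator, $\|T\|_{W^{2,2}}\le C(\|\Delta T\|_{L^2}+\|T\|_{L^2})$. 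It does not bound four derivatives by two. Indeed, on $g_a$ the left side is asymptotically $\mu_k^2\|\mathring{Ric}\|_{L^2}^2$, while the right side is $C(1+\mu_k)\|\mathring{Ric}\|_{L^2}^2$. This is the same lost derivative you isolated in Step 3. Your conditional fallback is therefore moot. Step 3 could not have been closed anyway, since $C^{100}$-smallness says nothing about the ratio $\|h\|_{H^{100}}/\|h\|_{H^2}$.

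On where to look: the constant scalar curvature hypothesis added in the next theorem does not rescue the derivative bound. High-frequency transverse-traceless eigentensors $h$ with $\nabla^*\nabla h=\mu h$ preserve scalar curvature to first order and satisfy $D\mathring{Ric}_{g_c}(h)=(\tfrac12\mu+1)h$, so the same growth appears. What the paper actually needs downstream, however, is $\int|W|^2\le C\int|\mathring{Ric}|^2$. That estimate loses no derivative, and your Steps 1--2 give it directly. In Bianchi gauge, $D\mathring{Ric}_{g_c}(h)=\tfrac12\nabla^*\nabla\mathring h+\mathring h$, with $\mathring h$ the trace-free part of $h$. The trace of $h$ is invisible to $L$, so it must be recovered from the gauge equation $\mathrm{div}\,\mathring h=\tfrac{n-2}{2n}\,d\,\mathrm{tr}\,h$ together with your scale normalization. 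This yields $\|h\|_{H^2}\le C\|\mathring{Ric}\|_{L^2}$. Since $W(g_c)=0$ and $W$ is second order in $h$, also $\|W\|_{L^2}\le C\|h\|_{H^2}$.
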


\begin{proof}
	Throughout the proof, the value of $C$ may change from line to line, but depends only on $\delta$ and $n$. First, integrating by parts, we obtain
	\begin{align*}
		\int_{\mathbb{S}^n} |\nabla \mathring{Ric}|^2 \, dvol_g
		&= -\int_{\mathbb{S}^n} \langle \mathring{Ric}, \Delta \mathring{Ric} \rangle \, dvol_g \\
		&\leq \|\mathring{Ric}\|_{L^2(\mathbb{S}^n, g)} \|\Delta \mathring{Ric}\|_{L^2(\mathbb{S}^n, g)}.
	\end{align*}
	On the other hand,
	\begin{align*}
		\|\Delta \mathring{Ric}\|_{L^2(\mathbb{S}^n, g)}^2
		&= \int_{\mathbb{S}^n} \langle \Delta \mathring{Ric}, \Delta \mathring{Ric} \rangle \, dvol_g \\
		&= \int_{\mathbb{S}^n} \langle \mathring{Ric}, \Delta^2 \mathring{Ric} \rangle \, dvol_g \\
		&\leq C \|\mathring{Ric}\|_{L^2(\mathbb{S}^n, g)} \left(\|\mathring{Ric}\|_{L^2(\mathbb{S}^n, g)} + \|\Delta \mathring{Ric}\|_{L^2(\mathbb{S}^n, g)}\right) \\
		&\leq C \|\mathring{Ric}\|_{L^2(\mathbb{S}^n, g)}^2 + \frac{1}{2} \|\Delta \mathring{Ric}\|_{L^2(\mathbb{S}^n, g)}^2,
	\end{align*}
	where in the third inequality we use the elliptic $W^{2,2}$ estimate for the Laplacian operator, and the constant $C$ depends on $\delta$ and the dimension $n$. By absorption, we derive that
	\[
	\|\Delta \mathring{Ric}\|_{L^2(\mathbb{S}^n, g)}^2 \leq C \|\mathring{Ric}\|_{L^2(\mathbb{S}^n, g)}^2.
	\]
	Combining this inequality with the first one, we finally obtain
	\[
	\|\nabla \mathring{Ric}\|_{L^2(\mathbb{S}^n, g)}^2 \leq C \|\mathring{Ric}\|_{L^2(\mathbb{S}^n, g)}^2. \qedhere
	\]
\end{proof}

Next, in order to control the Weyl curvature, we need to consider the derivative of $|W|^2$. Actually,  we have the following basic formula.

\begin{lem}[\cite{Catino}]\label{lem:catino-estimate}
	Let $(M^n, g)$, $n \geq 4$, be an $n$-dimensional Riemannian manifold. Then
	\[
	\frac{1}{2}\Delta |W|^2 = |\nabla W|^2 - 2\frac{n-2}{n-3}|\delta W|^2 + 2R_{pq}W_{pikl}W_{qikl} - 3W_{ijkl}W_{ijpq}W_{pqkl} - 2(W_{ijkl}C_{jkl})_i,
	\]
	where $\delta W$ is the divergence of the Weyl curvature, and $C$ is the Cotton tensor, i.e.,
	\[
	C_{ijk} = \nabla_k R_{ij} - \nabla_j R_{ik} - \frac{1}{2(n-1)}(R_k g_{ij} - R_j g_{ik}).
	\]
	Moreover,
	\[
	(\delta W)_{ijk} = \nabla_l W_{lijk} = -\frac{n-3}{n-2}C_{ijk}.
	\]
\end{lem}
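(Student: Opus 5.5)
The statement is a pointwise Bochner-type identity, so I would prove it by direct tensor calculus. The plan is to start from
\[
\tfrac12\Delta|W|^2=|\nabla W|^2+\langle W,\Delta W\rangle
\]
and compute $\Delta W$ using the second Bianchi identity rewritten in terms of $W$ and the Cotton tensor.

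\textbf{Step 1: the divergence of $W$.} I would first establish $(\delta W)_{ijk}=\nabla_l W_{lijk}=-\frac{n-3}{n-2}C_{ijk}$. Take the decomposition $R_{ijkl}=W_{ijkl}+(A\owedge g)_{ijkl}$, where $A$ is the Schouten tensor $A=\frac{1}{n-2}\big(\Ric-\frac{R}{2(n-1)}g\big)$. Take the divergence in the first index and use the contracted second Bianchi identity $\nabla_lR_{lijk}=\nabla_kR_{ij}-\nabla_jR_{ik}$ together with $\nabla_iR_{ij}=\frac12\nabla_jR$. This is routine, but I must fix the sign and index conventions of $C$ carefully so that they match the stated formula.

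\textbf{Step 2: Bianchi identity and one commutation.} Next I would rewrite $\nabla_pR_{ijkl}+\nabla_kR_{ijlp}+\nabla_lR_{ijpk}=0$ as
\[
\nabla_pW_{ijkl}+\nabla_kW_{ijlp}+\nabla_lW_{ijpk}=\big(\text{terms of the form } g\ast C\big),
\]
since the cyclic sum of $\nabla(A\owedge g)$ is expressible through $C$. Applying $\nabla_p$ and summing gives $\Delta W_{ijkl}=-\nabla_p\nabla_kW_{ijlp}-\nabla_p\nabla_lW_{ijpk}+\nabla(g\ast C)$.

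I would then commute $\nabla_p\nabla_k=\nabla_k\nabla_p+\mathrm{Rm}\ast$, which turns $\nabla_p W_{ijlp}$ into $\delta W$, hence into $C$ by Step 1. Contracting with $W_{ijkl}$, the terms containing $g$ drop out because $W$ is totally trace-free. The derivative terms take the form $W_{ijkl}\nabla_kC_{\cdot\cdot\cdot}$, which I write as
\[
\nabla_k(W\ast C)-(\nabla_kW_{ijkl})\,C .
\]
The second term is $\delta W\cdot C$, which by Step 1 is a multiple of $|\delta W|^2$. Collecting coefficients should produce $-2\frac{n-2}{n-3}|\delta W|^2-2(W_{ijkl}C_{jkl})_i$. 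The antisymmetries of $W$ in $(k,l)$ and the symmetry $W_{ijkl}=W_{klij}$ are what allow the two derivative terms to combine into one.

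\textbf{Step 3: the zeroth-order terms.} The commutators produce $\mathrm{Rm}\ast W\ast W$. I substitute $\mathrm{Rm}=W+A\owedge g$ and treat the two parts separately.

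In the $A\owedge g$ part, every contraction of $g$ with $W$ vanishes except those contractions that produce $R_{pq}W_{pikl}W_{qikl}$ and $R|W|^2$. I expect the $R|W|^2$ contributions coming from $\mathrm{Ric}$ and from the scalar part of $A$ to cancel, leaving exactly $2R_{pq}W_{pikl}W_{qikl}$.

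The cubic Weyl terms arise as $W_{ijkl}W_{pjql}W_{ipkq}$-type contractions together with $W_{ijkl}W_{ijpq}W_{pqkl}$. Using the first Bianchi identity one has
\[
W_{ipjq}W_{kplq}W_{ikjl}=\tfrac14W_{ijkl}W_{ijpq}W_{pqkl},
\]
with the exact constant to be checked. This reduces all cubic terms to the single invariant $-3W_{ijkl}W_{ijpq}W_{pqkl}$.

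\textbf{Main obstacle.} I expect the bookkeeping in Step 3 to be the main difficulty: tracking the coefficients of the several cubic contractions and verifying that they collapse to $-3$, and checking that the scalar-curvature terms cancel. As a safeguard I would test the final identity on conformally flat metrics, where both sides vanish, and on products of two space forms.
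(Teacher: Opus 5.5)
The paper gives no proof of this lemma; it only cites Catino. So the question is whether your computation closes, and it does not. Steps 1 and 2 are the standard derivation and they work, with one sign to fix. With the paper's convention ($R_{ijij}$ sectional, $R_{jl}=g^{ik}R_{ijkl}$), the contracted Bianchi identity is $\nabla_l R_{lijk}=\nabla_j R_{ik}-\nabla_k R_{ij}$, the opposite of what you wrote. That sign is what yields $\delta W=-\frac{n-3}{n-2}C$, and the derivative terms then combine to exactly $-2\frac{n-2}{n-3}|\delta W|^2-2(W_{ijkl}C_{jkl})_i$.

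The genuine gap is the end of Step 3. In $[\nabla_p,\nabla_k]W_{ijlp}$ the four pieces behave as follows:
\begin{itemize}
\item The two terms where curvature acts on $i$ and $j$ give $-4W_{ijkl}W_{ipkq}W_{pjql}$.
\item The term acting on $l$ gives $-W_{ijkl}W_{ijpq}W_{pqkl}$; this is the only place the first Bianchi identity helps.
\item The Schouten parts of $\mathrm{Rm}$ cancel completely between these two groups.
\item The trace term $R_{pkpm}=R_{km}$ produces $2R_{pq}W_{pikl}W_{qikl}$.
\end{itemize}
So what your method actually proves is
\[
\tfrac12\Delta|W|^2=|\nabla W|^2-2\tfrac{n-2}{n-3}|\delta W|^2+2R_{pq}W_{pikl}W_{qikl}-4W_{ijkl}W_{ipkq}W_{pjql}-W_{ijkl}W_{ijpq}W_{pqkl}-2(W_{ijkl}C_{jkl})_i .
\]
Your invariant $W_{ipjq}W_{kplq}W_{ikjl}$ equals $W_{ijkl}W_{ipkq}W_{pjql}$. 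No use of the first Bianchi identity turns it into a multiple of $W_{ijkl}W_{ijpq}W_{pqkl}$. To reach $-3$ you would need the constant $\tfrac12$, not $\tfrac14$. That relation is a dimensionally dependent identity, coming from the vanishing of antisymmetrization over six indices, and it holds only for $n=4,5$.

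In fact the lemma as printed is false for $n\ge 6$, and the product test you propose would detect this. Take $\mathbb{S}^3\times\mathbb{S}^3$ with unit factors. There $\nabla W=0$, $C=0$, $\Ric=2g$, and $W$ is diagonal on $\Lambda^2$ with eigenvalue $\frac35$ (multiplicity $6$) and $-\frac25$ (multiplicity $9$). Hence:
\begin{itemize}
\item $2R_{pq}W_{pikl}W_{qikl}=\frac{288}{5}$;
\item $W_{ijkl}W_{ijpq}W_{pqkl}=\frac{144}{25}$;
\item $W_{ijkl}W_{ipkq}W_{pjql}=\frac{324}{25}$.
\end{itemize}
The corrected identity balances, since $\frac{288}{5}-\frac{1296}{25}-\frac{144}{25}=0$. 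The printed one would instead require $\frac{288}{5}=\frac{432}{25}$.

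So your argument can establish the stated lemma only for $n=4,5$; in general dimension it gives the two-term version above. For the paper's purposes this is harmless. Theorem~\ref{thm:weyl-curvature-estimate} uses only the smallness of the cubic terms near the round metric, and the extra term is also bounded by $\beta|W|^2$ there.
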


Now we can state the main result of this section.

\begin{theo}\label{thm:weyl-curvature-estimate}
	Suppose $g_c$ is the standard round metric on $\mathbb{S}^n$ with scalar curvature $n(n-1)$. Then there exist constants $C > 0$ and $\delta > 0$ such that if $g$ is any metric on $\mathbb{S}^n$ with constant scalar curvature $n(n-1)$ satisfying $\|g - g_c\|_{C^{100}(g_c)} \leq \delta$, then
	\[
	\int_{\mathbb{S}^n} |W|^2 \, dvol_g \leq C \int_{\mathbb{S}^n} |\mathring{Ric}|^2 \, dvol_g,
	\]
	where $C$ depends only on $\delta$ and $n$.
\end{theo}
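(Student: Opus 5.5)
The plan is to integrate the Bochner-type identity of Lemma~\ref{lem:catino-estimate} over the closed manifold $(\mathbb{S}^n,g)$ with $n\geq 4$. The left-hand side $\frac12\Delta|W|^2$ and the divergence term $-2(W_{ijkl}C_{jkl})_i$ both integrate to zero, so
\[
0=\int_{\mathbb{S}^n}\Big(|\nabla W|^2-2\tfrac{n-2}{n-3}|\delta W|^2+2R_{pq}W_{pikl}W_{qikl}-3W_{ijkl}W_{ijpq}W_{pqkl}\Big)\,dvol_g .
\]
The idea is that the Ricci term supplies a positive multiple of $\int|W|^2$. Everything else is either nonnegative, small relative to $\int|W|^2$, or controlled by $\int|\mathring{Ric}|^2$.

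\textbf{Step 1: isolate the good term.} Since the scalar curvature is $n(n-1)$, write $R_{pq}=(n-1)g_{pq}+\mathring{R}_{pq}$. Then
\[
2R_{pq}W_{pikl}W_{qikl}=2(n-1)|W|^2+2\mathring{R}_{pq}W_{pikl}W_{qikl}.
\]
Because $\|g-g_c\|_{C^{100}(g_c)}\leq\delta$ and $g_c$ has $W=0$ and $\mathring{Ric}=0$, we have $\sup|W|+\sup|\mathring{Ric}|\leq \varepsilon(\delta)$ with $\varepsilon(\delta)\to 0$ as $\delta\to 0$. This gives the pointwise bounds
\[
|\mathring{R}_{pq}W_{pikl}W_{qikl}|\leq \varepsilon|W|^2,\qquad |W_{ijkl}W_{ijpq}W_{pqkl}|\leq C|W|^3\leq C\varepsilon|W|^2.
\]
Discarding $\int|\nabla W|^2\geq 0$ then yields
\[
\big(2(n-1)-C\varepsilon\big)\int|W|^2\leq 2\tfrac{n-2}{n-3}\int|\delta W|^2 .
\]
Shrinking $\delta$ makes the coefficient on the left at least $n-1$.

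\textbf{Step 2: control $\delta W$ by $\mathring{Ric}$.} By Lemma~\ref{lem:catino-estimate}, $\delta W=-\frac{n-3}{n-2}C$. Since $R$ is constant, the Cotton tensor reduces to
\[
C_{ijk}=\nabla_k\mathring{R}_{ij}-\nabla_j\mathring{R}_{ik},
\]
so $|\delta W|^2\leq 4|\nabla\mathring{Ric}|^2$. Theorem~\ref{thm:pinching-estimate} then gives $\int|\delta W|^2\leq C\int|\mathring{Ric}|^2$. Combining this with Step 1 proves the claim, with $C$ depending only on $n$ and $\delta$.

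\textbf{Main obstacle.} The analytic input is already packaged in Theorem~\ref{thm:pinching-estimate}. The step needing care is Step 1: the constants and sign conventions in Catino's identity must really produce $+2(n-1)|W|^2$ with a positive coefficient. This depends on the curvature sign convention, and we check it against the round-sphere normalization. We must also verify that the $C^{100}$ closeness gives uniform $C^0$ smallness of $W$ and $\mathring{Ric}$, which is needed to absorb the cubic and $\mathring{Ric}$-weighted terms. If the sign of the cubic term were unfavorable, the smallness argument still absorbs it, so only the linear Ricci term's sign is essential.
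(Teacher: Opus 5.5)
Your argument is the paper's proof of this theorem almost line for line. You integrate Catino's identity (Lemma~\ref{lem:catino-estimate}), drop $\int|\nabla W|^2$, extract $2(n-1)|W|^2$ from the Ricci term and absorb the cubic and $\mathring{Ric}$-weighted terms by $C^0$-smallness. You then bound $|\delta W|^2\le 4|\nabla\mathring{Ric}|^2$ using $R\equiv n(n-1)$, and close with Theorem~\ref{thm:pinching-estimate}.

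Everything before the last step is sound. The sign you flagged as the main obstacle is correct: in the Einstein case the Ricci term is the familiar $\frac{2R}{n}|W|^2$. The genuine gap, which you share with the paper, is the last step. Theorem~\ref{thm:pinching-estimate} is a reverse Poincar\'e inequality, and it is false, even among metrics with $R\equiv n(n-1)$.

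Here is a counterexample. Let $\psi$ be a transverse-traceless tensor on $(\mathbb{S}^n,g_c)$, invariant under the antipodal map, with $\nabla^*\nabla\psi=\mu\psi$, and set $g_\epsilon=g_c+\epsilon\psi$. Linearizing at $g_c$ gives $R(g_\epsilon)=n(n-1)+O(\epsilon^2)$ and $\mathring{Ric}(g_\epsilon)=\epsilon(\frac{\mu}{2}+1)\psi+O(\epsilon^2)$. An $O(\epsilon^2)$ conformal correction makes the scalar curvature exactly $n(n-1)$ without changing the leading term; antipodal invariance removes the cokernel of $\Delta+n$, namely the first spherical harmonics. Hence $\int|\nabla\mathring{Ric}|^2/\int|\mathring{Ric}|^2\to\mu$ as $\epsilon\to 0$, while $\|g_\epsilon-g_c\|_{C^{100}}\to 0$. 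Since $\mu$ is unbounded over such $\psi$, no uniform $C$ exists. Already $e^{2\epsilon\phi}g_c$, with $\phi$ a high spherical harmonic, contradicts Theorem~\ref{thm:pinching-estimate} as stated. The step in its proof called an elliptic $W^{2,2}$ estimate has no equation for $\mathring{Ric}$ behind it.

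The structural reason is that your Step 1 loses a derivative. $W$ and $\mathring{Ric}$ are both second order in the metric, whereas $\delta W=-\frac{n-3}{n-2}C$ is third order. In the example above, $\int|W|^2$ and $\int|\mathring{Ric}|^2$ are of size $\epsilon^2\mu^2$, but $\int|\delta W|^2$ is of order $\epsilon^2\mu^3$. Catino's identity naturally yields $\|W\|_{H^1}\lesssim\|C\|_{L^2}$, not an $L^2$ bound for $W$ by $\mathring{Ric}$. $C^{100}$-closeness bounds the amplitude of $g-g_c$ but not its frequency, so nothing in the hypotheses gives the lost derivative back.

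To prove the statement you must compare $W$ and $\mathring{Ric}$ at the same differential order rather than through $\delta W$ in $L^2$. One route is the ellipticity of Ricci modulo diffeomorphisms and conformal changes near $g_c$. On transverse-traceless directions the linearization of $\mathring{Ric}$ is $\frac12\nabla^*\nabla+1$, which is invertible. Meanwhile, a diffeomorphism only moves $W$ and a conformal factor only rescales it, since $W(e^{2u}g)=e^{2u}W(g)$.
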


\begin{proof}
	Since $\|g - g_c\|_{C^{100}(g_c)} \leq \delta$, we have
	\[
	R_{pq}W_{pikl}W_{qikl} \geq (n-1-\beta)|W|^2
	\]
	and
	\[
	|W_{ijkl}W_{ijpq}W_{pqkl}| \leq \beta|W|^2,
	\]
	where $\beta$ is a sufficiently small positive constant depending on $\delta$ and $n$. Notice that
	\[
	|\delta W|^2 = \frac{(n-3)^2}{(n-2)^2}|C|^2 \leq 4 \cdot \frac{(n-3)^2}{(n-2)^2}|\nabla Ric|^2 = 4 \cdot \frac{(n-3)^2}{(n-2)^2}|\nabla \mathring{Ric}|^2,
	\]
	since $g$ has constant scalar curvature. Now, integrating the identity in Lemma \ref{lem:catino-estimate}, we obtain
	\[
	\int_{\mathbb{S}^n} \bigl(2(n-1-\beta) - 3\beta\bigr)|W|^2 \, dvol_g \leq 2\frac{n-2}{n-3} \cdot 4 \cdot \frac{(n-3)^2}{(n-2)^2} \int_{\mathbb{S}^n} |\nabla \mathring{Ric}|^2 \, dvol_g.
	\]
	Combining this with Theorem \ref{thm:pinching-estimate}, we get
	\[
	\int_{\mathbb{S}^n} |W|^2 \, dvol_g \leq C \int_{\mathbb{S}^n} |\nabla \mathring{Ric}|^2 \, dvol_g \leq C \int_{\mathbb{S}^n} |\mathring{Ric}|^2 \, dvol_g,
	\]
	where $C$ depends only on $\delta$ and $n$.
\end{proof}

\section{A key estimate of $|\nabla \mathrm{Ric}|^2$}\label{sec:key-estimate}

For notational simplicity, we denote $K_{\alpha\beta} = R(e_\alpha, e_\beta, e_\alpha, e_\beta)$ and $W^\Sigma_{\alpha\beta} = W^\Sigma(e_\alpha, e_\beta, e_\alpha, e_\beta)$.

\begin{lem}\label{lem:ricci-gradient}
	Let $(M^n, g, f)$ be an $n$-dimensional shrinking gradient Ricci soliton with constant scalar curvature $R = \frac{n-2}{2}$. Then
	\begin{equation*}\label{eq:ricci-gradient}
		|\nabla \mathrm{Ric}|^2 = 2\sum_{\substack{\alpha,\beta=2 \\ \alpha\neq\beta}}^n K_{\alpha\beta}\lambda_{\alpha}\lambda_{\beta} - \frac{n-2}{4}.
	\end{equation*}
\end{lem}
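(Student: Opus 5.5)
The plan is to read the identity off the Bochner-type formula for $\Delta_f|\mathrm{Ric}|^2$, using that $|\mathrm{Ric}|^2$ is constant. First we record that $|\mathrm{Ric}|^2$ is constant. Since $R=\frac{n-2}{2}$ is constant, \eqref{drift-laplacian-scalar} gives $0=\Delta_f R=R-2|\mathrm{Ric}|^2$, hence
\[
|\mathrm{Ric}|^2=\frac{R}{2}=\frac{n-2}{4}
\]
everywhere on $M$. Consequently $\Delta_f|\mathrm{Ric}|^2=0$.

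Next we expand the drift Laplacian of the square:
\[
0=\tfrac12\Delta_f|\mathrm{Ric}|^2=|\nabla\mathrm{Ric}|^2+\langle \mathrm{Ric},\Delta_f\mathrm{Ric}\rangle .
\]
This uses that $\Delta_f=\Delta-\nabla_{\nabla f}$ and that $\nabla_{\nabla f}$ acts as a derivation. We then substitute the elliptic equation \eqref{elliptic-equation}:
\[
\langle \mathrm{Ric},\Delta_f\mathrm{Ric}\rangle=R_{ij}R_{ij}-2R_{ikjl}R_{kl}R_{ij}=|\mathrm{Ric}|^2-2R_{ikjl}R_{ij}R_{kl}.
\]
Therefore
\[
|\nabla\mathrm{Ric}|^2=2R_{ikjl}R_{ij}R_{kl}-|\mathrm{Ric}|^2 .
\]

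It remains to evaluate the cubic term pointwise. At a point $p\in M\setminus f^{-1}(0)$ we work in the orthonormal eigenframe $\{e_1,\dots,e_n\}$ with $R_{ij}=\lambda_i\delta_{ij}$ and $e_1=\nabla f/|\nabla f|$. Then
\[
R_{ikjl}R_{ij}R_{kl}=\sum_{i,k}R_{ikik}\lambda_i\lambda_k .
\]
The diagonal terms $i=k$ vanish by antisymmetry of the curvature tensor. All terms containing the index $1$ vanish because $\lambda_1=0$, since $\mathrm{Ric}(\nabla f)=0$. What remains is exactly $\sum_{\alpha\neq\beta\geq 2}K_{\alpha\beta}\lambda_\alpha\lambda_\beta$. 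Inserting $|\mathrm{Ric}|^2=\frac{n-2}{4}$ yields the claimed formula on $M\setminus f^{-1}(0)$.

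Both sides are continuous, and $f^{-1}(0)$ has empty interior because $|\nabla f|^2=f$ with $f$ nonconstant. Hence the identity extends to all of $M$ by continuity, or can simply be stated wherever the frame is defined.

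The only genuine point to check is the sign convention. We must make sure that $R_{ikjl}$ in \eqref{elliptic-equation}, with $i=j$ and $k=l$, equals the sectional curvature $K_{ik}=R(e_i,e_k,e_i,e_k)$ in the paper's convention. This is consistent with the paper's use of $R_{\alpha\beta\alpha\beta}$ as sectional curvature in the Gauss equation computations. It is also consistent with the round-sphere check, where $\Delta_f R_{ij}=R_{ij}-2R_{ikjl}R_{kl}$ must vanish on an Einstein shrinker. I expect no further obstacle.
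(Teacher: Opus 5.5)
Your proposal is correct and matches the paper's proof step for step: $|\mathrm{Ric}|^2=R/2$ from \eqref{drift-laplacian-scalar}, then $\frac12\Delta_f|\mathrm{Ric}|^2=|\nabla\mathrm{Ric}|^2+\langle\mathrm{Ric},\Delta_f\mathrm{Ric}\rangle$, then substitution of \eqref{elliptic-equation}, then evaluation of $R_{ikjl}R_{ij}R_{kl}$ in the eigenframe with $\lambda_1=0$. The extension across $f^{-1}(0)$ is an extra the paper omits; if you keep it, a cleaner reason for the empty interior than nonconstancy of $f$ is this: $f\equiv 0$ on an open set would give $\nabla^2 f=0$ there, hence $\mathrm{Ric}=\frac12 g$ and $R=\frac n2\neq\frac{n-2}{2}$, a contradiction.
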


\begin{proof}
	Notice that $|\mathrm{Ric}|^2 = \frac{1}{2}R = \frac{n-2}{4}$. We have
	\[
	\frac{1}{2}\Delta_f|\mathrm{Ric}|^2 = R_{ij}\Delta_f R_{ij} + |\nabla \mathrm{Ric}|^2,
	\]
	and therefore
\begin{equation*}
	\begin{aligned}
		|\nabla \mathrm{Ric}|^2 &= -\sum_{i,j=1}^n R_{ij}\Delta_f R_{ij} \\
		&= \sum_{i,j=1}^n R_{ij}(2R_{ikjl}R_{kl} - R_{ij}) \\
		&= \sum_{\substack{\alpha,\beta=2 \\ \alpha\neq\beta}}^n 2R_{\alpha\beta\alpha\beta}\lambda_\alpha\lambda_\beta - |\mathrm{Ric}|^2 \\
		&= \sum_{\substack{\alpha,\beta=2 \\ \alpha\neq\beta}}^n 2K_{\alpha\beta}\lambda_{\alpha}\lambda_{\beta} - \frac{n-2}{4}. \qedhere
	\end{aligned}
\end{equation*}
\end{proof}

Now we can state the key estimate for $|\nabla \mathrm{Ric}|^2$; the five-dimensional case was derived in \cite{Li-Ou-Qu-Wu}.

\begin{prop}\label{prop:key-estimate}
	Let $(M^n, g, f)$ be an $n$-dimensional shrinking gradient Ricci soliton with constant scalar curvature $R = \frac{n-2}{2}$. Suppose the assumptions in Theorem \ref{thm:main} hold. Then there exists a constant $C = C(n) > 0$ such that
	\[
	|\nabla \mathrm{Ric}|^2 \leq -\frac{1}{n-3}(\lambda_1+\lambda_2) + K_{12} + C|W^{\Sigma}|^2
	\]
	on $M \setminus D(a)$ for some $a > 0$.
\end{prop}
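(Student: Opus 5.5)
The plan is to start from Lemma \ref{lem:ricci-gradient}, which expresses $|\nabla \mathrm{Ric}|^2$ as $2\sum_{\alpha\neq\beta}K_{\alpha\beta}\lambda_\alpha\lambda_\beta-\frac{n-2}{4}$, and to substitute for $K_{\alpha\beta}$ the Gauss/Weyl formula for $R_{\alpha\beta\alpha\beta}$ derived in the Preliminary section. This splits $|\nabla\mathrm{Ric}|^2$ into four groups of terms:
\begin{enumerate}
\item[(a)] an algebraic part in the $\lambda_\alpha$ coming from $\frac{1}{n-3}(\lambda_\alpha+\lambda_\beta)-\frac{1}{2(n-3)}$;
\item[(b)] a radial part $-\frac{1}{n-3}\sum_{\alpha\neq\beta}(K_{1\alpha}+K_{1\beta})\lambda_\alpha\lambda_\beta$;
\item[(c)] a part of order $1/f$ built from $h_{\alpha\alpha}=\frac{1/2-\lambda_\alpha}{\sqrt f}$;
\item[(d)] the Weyl part $2\sum_{\alpha\neq\beta}W^\Sigma_{\alpha\beta}\lambda_\alpha\lambda_\beta$.
\end{enumerate}

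\textbf{Algebraic part (a).} I will rewrite it using the constraints $\lambda_1=0$, $\sum_{\alpha\ge2}\lambda_\alpha=\frac{n-2}{2}$ and $\sum_{\alpha\ge2}\lambda_\alpha^2=\frac{n-2}{4}$. For instance,
\[
\sum_{\alpha\neq\beta}(\lambda_\alpha+\lambda_\beta)\lambda_\alpha\lambda_\beta=2\Big(\tfrac{n-2}{2}\cdot\tfrac{n-2}{4}-\sum_\alpha\lambda_\alpha^3\Big),
\qquad
\sum_{\alpha\neq\beta}\lambda_\alpha\lambda_\beta=\Big(\tfrac{n-2}{2}\Big)^2-\tfrac{n-2}{4}.
\]
It is convenient to write everything in the deviations $\mu_\alpha=\frac12-\lambda_\alpha$. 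By Lemma \ref{lambda-estimates}, $\sum_{\alpha\ge3}\mu_\alpha^2=\lambda_2(1-\lambda_2)$, so every remainder is controlled by $\lambda_2$ through \eqref{la}, \eqref{l345} and \eqref{l345'}. Since the soliton converges smoothly to $\mathbb{R}^2\times\mathbb{S}^{n-2}$, for $f\ge a$ large we have $\lambda_2$ small and $\lambda_\alpha\approx\frac12$ for $\alpha\ge3$. I expect the constant $-\frac{n-2}{4}$ to cancel against the zeroth-order piece, leaving a multiple of $\lambda_2$ that should produce the term $-\frac{1}{n-3}(\lambda_1+\lambda_2)$.

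\textbf{Radial part (b) and the $1/f$ part (c).} For (b) I will use Lemma \ref{sectional-curvature},
\[
K_{1\alpha}=\frac{\nabla_{\nabla f}R_{\alpha\alpha}+\lambda_\alpha(\tfrac12-\lambda_\alpha)}{f}.
\]
Hypothesis (i), together with the Claim \eqref{ricci-curvature-level-set}, says exactly that $\mathrm{Ric}^\Sigma=\mathrm{Ric}-\frac{\nabla_{\nabla f}\mathrm{Ric}}{f}\ge0$, which bounds $\nabla_{\nabla f}R_{\alpha\alpha}$ in terms of $f\lambda_\alpha$. Because $\sum_\alpha K_{1\alpha}=\mathrm{Ric}(e_1,e_1)=0$, I can regroup the weighted sum as $\sum_\alpha K_{1\alpha}\lambda_\alpha(\frac{n-2}{2}-\lambda_\alpha)$. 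Writing $\lambda_\alpha=\frac12-\mu_\alpha$, this concentrates the contribution on $K_{12}$ plus errors of size $\lambda_2\cdot O(\mu)$. These errors are then either absorbed into the good $\lambda_2$ term from (a) or discarded by sign, using hypothesis (i) and $\mathrm{Ric}\ge0$ from Lemma \ref{ricci nonnegative}. The $1/f$ terms in (c) are quadratic in the $\mu_\alpha$. By Lemma \ref{lambda-estimates} they are bounded by $C\lambda_2/f$, which can be absorbed for $f\ge a$.

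\textbf{Weyl part (d).} Here I use that $W^\Sigma$ is totally trace-free on $\Sigma$, so $\sum_\beta W^\Sigma_{\alpha\beta}=0$. Writing $\lambda_\alpha\lambda_\beta=(\frac12-\mu_\alpha)(\frac12-\mu_\beta)$, the constant and linear terms in $\mu$ drop out. Only $2\sum W^\Sigma_{\alpha\beta}\mu_\alpha\mu_\beta$ survives, and Young's inequality gives
\[
2\sum_{\alpha\neq\beta} W^\Sigma_{\alpha\beta}\mu_\alpha\mu_\beta\le C|W^\Sigma|^2+\epsilon\Big(\sum_\alpha\mu_\alpha^2\Big)^2 .
\]
The $\epsilon$-term is $O(\lambda_2^2)$ for $\alpha\ge3$, but it also contains $\mu_2^4\approx\frac1{16}$, which is not small. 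So I need to arrange that terms involving $\mu_2$ are handled exactly, using $W^\Sigma_{2\beta}$ tracefree in $\beta$ so that only the $\mu_2\mu_\beta$ with $\beta\ge3$ combination survives; this is again bounded by $C|W^\Sigma|^2+\epsilon\lambda_2$.

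\textbf{Main obstacle.} The hardest step is the bookkeeping in (a)–(b): verifying that, after the cancellations, the coefficient of $\lambda_2$ is exactly $-\frac{1}{n-3}$ and that the radial contribution reduces to $K_{12}$ with errors of the right sign. I would first check this on the model $\mathbb{R}^2\times\mathbb{S}^{n-2}$, where both sides vanish, and then linearize in $\lambda_2$ and the $\mu_\alpha$ ($\alpha\ge3$), invoking hypothesis (i) precisely at the one place where a sign on $\nabla_{\nabla f}R_{\alpha\alpha}$ is needed.
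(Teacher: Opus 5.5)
The gap is in (b). Your decomposition (a)--(d) is the paper's $I$--$IV$, and your handling of (c) and (d) is sound. Your reduction of (d) to $2\sum_{\alpha\neq\beta}W^\Sigma_{\alpha\beta}\mu_\alpha\mu_\beta$ via tracelessness is in fact cleaner than the paper's.

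Take the correct prefactor $\frac{2}{n-3}$ (not $\frac{1}{n-3}$), write $\lambda_\alpha(\frac{n-2}{2}-\lambda_\alpha)=\frac{n-3}{4}-\frac{n-4}{2}\mu_\alpha-\mu_\alpha^2$, and use $\sum_\alpha K_{1\alpha}=0$. The radial part is then
\[
\frac{2(n-4)}{n-3}\sum_{\alpha\ge2}K_{1\alpha}\mu_\alpha+\frac{4}{n-3}\sum_{\alpha\ge2}K_{1\alpha}\mu_\alpha^2 .
\]
For $\alpha\ge3$ the factor $K_{1\alpha}$ contains $\nabla_{\nabla f}R_{\alpha\alpha}/f$. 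A priori this is bounded only by $|\nabla\mathrm{Ric}|/\sqrt f$, not by anything involving $\lambda_2$, so these errors are not of size $\lambda_2\cdot O(\mu)$.

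Hypothesis (i) cannot make them so. It is the one-sided bound $\nabla_{\nabla f}R_{\alpha\alpha}\le f\lambda_\alpha\approx f/2$.
\begin{itemize}
\item It says nothing about the linear sum, where the weights $\mu_\alpha$ ($\alpha\ge3$) have indefinite sign.
\item It gives only the upper bound $K_{12}\le\lambda_2(1+\mu_2/f)$. But $K_{12}$ actually receives a coefficient $c<1$, and turning $c$ into $1$ needs a lower bound on $K_{12}$.
\item Where (i) does apply, on the nonnegatively weighted $\frac{4}{n-3}\sum_{\alpha\ge3}K_{1\alpha}\mu_\alpha^2$, it costs about $\frac{2}{n-3}\lambda_2$.
\end{itemize}
That cost is exactly the available slack. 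Part (a) does not give $-\frac{1}{n-3}\lambda_2$ as you expect; it gives exactly
\[
I=-\tfrac{3}{n-3}\lambda_2+\tfrac{6}{n-3}\lambda_2^2-\tfrac{4}{n-3}\lambda_2^3-\tfrac{4}{n-3}\sum_{\alpha\ge3}\bigl(\lambda_\alpha-\tfrac12\bigr)^3 .
\]
The extra $\frac{2}{n-3}\lambda_2$ must pay for the $\epsilon\lambda_2$ from (d), the $C\lambda_2/f$ from (c), and the higher-order terms. Spending it on (i) leaves $-(\frac{1}{n-3}-\eta)\lambda_2$ for some $\eta>0$, short of the stated constant.

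The paper does not use (i) in this proposition at all; it enters only later, through $\mathrm{Ric}^\Sigma\ge0$ and the splitting of $\Sigma$. The missing mechanism is absorption into the left-hand side, in two steps.
\begin{itemize}
\item Linear sum: $\sum_\alpha\lambda_\alpha\nabla_{\nabla f}R_{\alpha\alpha}=\frac12\nabla_{\nabla f}|\mathrm{Ric}|^2=0$. Hence $\sum_\alpha K_{1\alpha}\mu_\alpha=-\sum_\alpha K_{1\alpha}\lambda_\alpha=\frac1f\sum_\alpha\lambda_\alpha\mu_\alpha^2\le C\lambda_2/f$, which disposes of it.
\item Quadratic sum: use $2K_{1\alpha}\mu_\alpha^2\le K_{1\alpha}^2+\mu_\alpha^4$ for $\alpha\ge3$, together with $\sum_\alpha(\nabla_{\nabla f}R_{\alpha\alpha})^2\le|\nabla f|^2|\nabla\mathrm{Ric}|^2=f|\nabla\mathrm{Ric}|^2$. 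This gives $\sum_\alpha K_{1\alpha}^2\le 2|\nabla\mathrm{Ric}|^2/f+C\lambda_2/f^2$. Move $2|\nabla\mathrm{Ric}|^2/f$ to the left for $f\ge a$.
\end{itemize}
These steps cost only $o(1)\lambda_2$, so the slack from (a) survives and only a multiple of $K_{12}$ remains.

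Since $K_{12}$ has no sign, fixing its coefficient to $1$ also calls for absorption rather than (i). From $\sum_{\alpha\ge2}\mu_\alpha\nabla_{\nabla f}R_{\alpha\alpha}=0$ one gets $|K_{12}|\le\delta|\nabla\mathrm{Ric}|^2+C_\delta\lambda_2/f$. The paper's final division step glosses over this point.
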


\begin{proof}
	Recall that
\begin{equation*}
	\begin{aligned}
		K_{\alpha\beta} &= K^{\Sigma}_{\alpha\beta} - h_{\alpha\alpha}h_{\beta\beta} + h_{\alpha\beta}^2 \notag \\
		&= \frac{1}{n-3}\left(\lambda_{\alpha} + \lambda_{\beta} - K_{\alpha 1} - K_{\beta 1} + H(h_{\alpha\alpha} + h_{\beta\beta}) - h_{\alpha\alpha}^2 - h_{\beta\beta}^2\right) - \frac{1}{2(n-3)} - h_{\alpha\alpha}h_{\beta\beta} \notag \\
		&\quad + W^{\Sigma}_{\alpha\beta} \notag \\
		&= \frac{1}{n-3}(\lambda_{\alpha} + \lambda_{\beta}) - \frac{1}{2(n-3)} - \frac{1}{n-3}(K_{\alpha 1} + K_{\beta 1}) \notag \\
		&\quad + \frac{1}{2(n-3)f}\left[\left(\frac{1}{2} - \lambda_{\alpha}\right) + \left(\frac{1}{2} - \lambda_{\beta}\right)\right] \notag \\
		&\quad - \frac{\left(\frac{1}{2} - \lambda_{\alpha}\right)^2 + \left(\frac{1}{2} - \lambda_{\beta}\right)^2}{(n-3)f} - \frac{\left(\frac{1}{2} - \lambda_{\alpha}\right)\left(\frac{1}{2} - \lambda_{\beta}\right)}{f} + W^{\Sigma}_{\alpha\beta}. \label{eq:K-alpha-beta}
	\end{aligned}
\end{equation*}
	By Lemma \ref{lem:ricci-gradient}, we have
	\begin{equation*}
	\begin{aligned}
		|\nabla \mathrm{Ric}|^2 &= \sum_{\substack{\alpha,\beta=2 \\ \alpha\neq\beta}}^n 2K_{\alpha\beta}\lambda_{\alpha}\lambda_{\beta} - \frac{n-2}{4} \notag \\
		&= \sum_{\substack{\alpha,\beta=2 \\ \alpha\neq\beta}}^n \left(\frac{2}{n-3}(\lambda_{\alpha} + \lambda_{\beta}) - \frac{1}{n-3}\right)\lambda_{\alpha}\lambda_{\beta} - \frac{n-2}{4} \notag \\
		&\quad - \sum_{\substack{\alpha,\beta=2 \\ \alpha\neq\beta}}^n \frac{2}{n-3}(K_{1\alpha} + K_{1\beta})\lambda_\alpha\lambda_\beta \notag \\
		&\quad + \sum_{\substack{\alpha,\beta=2 \\ \alpha\neq\beta}}^n \frac{1}{(n-3)f}\left[\left(\frac{1}{2} - \lambda_{\alpha}\right) + \left(\frac{1}{2} - \lambda_{\beta}\right)\right]\lambda_\alpha\lambda_\beta \notag \\
		&\quad - 2\sum_{\substack{\alpha,\beta=2 \\ \alpha\neq\beta}}^n \frac{\left(\frac{1}{2} - \lambda_{\alpha}\right)^2 + \left(\frac{1}{2} - \lambda_{\beta}\right)^2}{(n-3)f}\lambda_\alpha\lambda_\beta \notag \\
		&\quad - 2\sum_{\substack{\alpha,\beta=2 \\ \alpha\neq\beta}}^n \frac{\left(\frac{1}{2} - \lambda_{\alpha}\right)\left(\frac{1}{2} - \lambda_{\beta}\right)}{f}\lambda_\alpha\lambda_\beta + 2\sum_{\substack{\alpha,\beta=2 \\ \alpha\neq\beta}}^n W^{\Sigma}_{\alpha\beta}\lambda_\alpha\lambda_\beta. \label{eq:gradient-decomposition}
	\end{aligned}
\end{equation*}
	Next, we handle these terms one by one.
	
	\begin{claim}\label{claim:I}
		\begin{equation*}
			\begin{aligned}
			I &:= \sum_{\substack{\alpha,\beta=2 \\ \alpha\neq\beta}}^n \left(\frac{2}{n-3}(\lambda_{\alpha} + \lambda_{\beta}) - \frac{1}{n-3}\right)\lambda_{\alpha}\lambda_{\beta} - \frac{n-2}{4} \notag \\
			&\leq -\frac{3}{n-3}(\lambda_{1}+\lambda_{2}) + \frac{12}{n-3}(\lambda_{1}+\lambda_{2})^2 - \frac{12}{n-3}(\lambda_{1}+\lambda_{2})^3 + 2(n-2)(n-4)(\lambda_{1}+\lambda_{2})^{\frac{3}{2}}. 
		\end{aligned}
		\end{equation*}
	\end{claim}
	
	\begin{proof}[Proof of Claim \ref{claim:I}]
		We compute
	\begin{equation*}
	\begin{aligned}
			I &= \sum_{\substack{\alpha,\beta=2 \\ \alpha\neq\beta}}^n \left(\frac{2}{n-3}(\lambda_{\alpha} + \lambda_{\beta}) - \frac{1}{n-3}\right)\lambda_{\alpha}\lambda_{\beta} - \frac{n-2}{4} \\
			&= \frac{1}{n-3}\sum_{\substack{\alpha,\beta=2 \\ \alpha\neq\beta}}^n 4\lambda_\alpha^2 \lambda_\beta - \frac{1}{n-3}\sum_{\substack{\alpha,\beta=2 \\ \alpha\neq\beta}}^n \lambda_\alpha\lambda_\beta - \frac{n-2}{4} \\
			&= \frac{4}{n-3}\sum_{\alpha=2}^n \lambda_\alpha^2\left(\frac{n-2}{2} - \lambda_\alpha\right) - \frac{1}{n-3}\sum_{\alpha=2}^n\lambda_\alpha\left(\frac{n-2}{2} - \lambda_\alpha\right) - \frac{n-2}{4} \\
			&= \frac{4}{n-3}\left(\frac{n-2}{2} \cdot \frac{n-2}{4} - \sum_{\alpha=2}^n \lambda_\alpha^3\right) - \frac{1}{n-3}\left(\frac{n-2}{2} \cdot \frac{n-2}{2} - \frac{n-2}{4}\right) - \frac{n-2}{4} \\
			&= \frac{4}{n-3}\left(\frac{(n-2)^2}{8} - \sum_{\alpha=2}^n \lambda_\alpha^3\right) - \frac{1}{n-3} \cdot \frac{(n-2)(n-3)}{4} - \frac{n-2}{4} \\
			&= \frac{4}{n-3}\left(\frac{(n-2)^2}{8} - \sum_{\alpha=2}^n \lambda_\alpha^3\right) - \frac{n-2}{2},
		\end{aligned}
		\end{equation*}
		where we used $\sum_{\alpha=2}^n \lambda_\alpha = \frac{n-2}{2}$ and $\sum_{\alpha=2}^n \lambda_\alpha^2 = \frac{n-2}{4}$.
		
		Since for $\alpha = 3, \ldots, n$,
		\[
		\left(\lambda_\alpha - \frac{1}{2}\right)^3 = \lambda_\alpha^3 - \frac{3}{2}\lambda_\alpha^2 + \frac{3}{4}\lambda_\alpha - \frac{1}{8},
		\]
		we have
		\begin{align*}
			\sum_{\alpha=2}^n \lambda_\alpha^3 &= \lambda_2^3 + \sum_{\alpha=3}^n\left(\lambda_\alpha - \frac{1}{2}\right)^3 + \frac{3}{2}\sum_{\alpha=3}^n\lambda_\alpha^2 - \frac{3}{4}\sum_{\alpha=3}^n\lambda_\alpha + \frac{n-2}{8} \\
			&= \lambda_2^3 + \sum_{\alpha=3}^n\left(\lambda_\alpha - \frac{1}{2}\right)^3 + \frac{3}{2}\left(\frac{n-2}{4} - \lambda_2^2\right) - \frac{3}{4}\left(\frac{n-2}{2} - \lambda_2\right) + \frac{n-2}{8} \\
			&= \frac{n-2}{8} + \lambda_2^3 + \sum_{\alpha=3}^n\left(\lambda_\alpha - \frac{1}{2}\right)^3 - \frac{3}{2}\lambda_2^2 + \frac{3}{4}\lambda_2.
		\end{align*}
		Substituting this identity into $I$, we obtain
		\begin{align*}
			I &= \frac{4}{n-3}\left(\frac{(n-2)^2}{8} - \frac{n-2}{8} - \lambda_2^3 - \sum_{\alpha=3}^n\left(\lambda_\alpha - \frac{1}{2}\right)^3 + \frac{3}{2}\lambda_2^2 - \frac{3}{4}\lambda_2\right) - \frac{n-2}{2} \\
			&= -\frac{3}{n-3}\lambda_2 + \frac{6}{n-3}\lambda_2^2 - \frac{4}{n-3}\sum_{\alpha=3}^n\left(\lambda_\alpha - \frac{1}{2}\right)^3 - \frac{4}{n-3}\lambda_2^3 \\
			&\leq -\frac{3}{n-3}\lambda_2 + \frac{6}{n-3}\lambda_2^2 + 4\lambda_2^{\frac{3}{2}} - \frac{4}{n-3}\lambda_2^3,
		\end{align*}
		where in the last inequality we used $|\lambda_{\alpha} - \frac{1}{2}|^2 \leq \lambda_2(1-\lambda_2) \leq \lambda_2$ for $\alpha = 3, \ldots, n$.
	\end{proof}
	
	
	\begin{claim}\label{claim:II}
		\begin{equation*}\label{eq:claim-II}
			\begin{aligned}	
			II  :=& -\sum_{\substack{\alpha,\beta=2 \\ \alpha\neq\beta}}^n \frac{2}{n-3}(K_{1\alpha} + K_{1\beta})\lambda_\alpha\lambda_\beta\\
			\leq& 2\frac{|\nabla \mathrm{Ric}|^2}{f} + o(1)(\lambda_1+\lambda_2) + \frac{1}{2}K_{12}
				\end{aligned}
		\end{equation*}
		on $M \setminus D(a)$.
	\end{claim}
	
	\begin{proof}[Proof of Claim \ref{claim:II}]
		We rewrite this term as follows:
		\begin{align*}
			II &= -\sum_{\substack{\alpha,\beta=2 \\ \alpha\neq\beta}}^n (K_{1\alpha} + K_{1\beta})\lambda_\alpha\lambda_\beta \\
			&= -2\sum_{\substack{\alpha,\beta=2 \\ \alpha\neq\beta}}^n K_{1\alpha}\lambda_\alpha\lambda_\beta \\
			&= -2\sum_{\alpha=2}^n K_{1\alpha}\lambda_\alpha(R - \lambda_\alpha) \\
			&= -(n-2)\sum_{\alpha=2}^n K_{1\alpha}\lambda_\alpha + 2\sum_{\alpha=2}^n K_{1\alpha}\lambda_\alpha^2 \\
			&= -(n-2)\sum_{\alpha=2}^n K_{1\alpha}\lambda_\alpha + 2\left[\sum_{\alpha=2}^n K_{1\alpha}\left(\lambda_\alpha - \frac{1}{2}\right)^2 + \sum_{\alpha=2}^n K_{1\alpha}\lambda_\alpha - \frac{1}{4}\sum_{\alpha=2}^n K_{1\alpha}\right] \\
			&= (-n+4)\sum_{\alpha=2}^n K_{1\alpha}\lambda_\alpha + 2\sum_{\alpha=2}^n K_{1\alpha}\left(\lambda_\alpha - \frac{1}{2}\right)^2,
		\end{align*}
		where we used $\sum_{\alpha=2}^n K_{1\alpha} = R_{11} = 0$ in the last equality.
		
		By Lemma \ref{sectional-curvature} and \eqref{l345}, we have
		\begin{align*}
			-\sum_{\alpha=2}^n K_{1\alpha}\lambda_\alpha &= \frac{1}{f}\left[\left(\lambda_2 - \frac{1}{2}\right)^2\lambda_2 + \sum_{\alpha=3}^n\left(\lambda_\alpha - \frac{1}{2}\right)^2\lambda_\alpha\right] \\
			&\leq \frac{1}{f}\left[\left(\lambda_2 - \frac{1}{2}\right)^2\lambda_2 + \lambda_2(1-\lambda_2)\left(\frac{n-2}{2} - \lambda_2\right)\right] \\
			&\leq C\frac{\lambda_1+\lambda_2}{f}.
		\end{align*}
		It follows from \eqref{k1a} that
		\begin{align*}
			&2\sum_{\alpha=2}^n K_{1\alpha}\left(\lambda_\alpha - \frac{1}{2}\right)^2 \\
			&= 2K_{12}\left(\lambda_2 - \frac{1}{2}\right)^2 + 2\sum_{\alpha=3}^n K_{1\alpha}\left(\lambda_\alpha - \frac{1}{2}\right)^2 \\
			&\leq 2K_{12}\left(\lambda_2^2 - \lambda_2 + \frac{1}{4}\right) + \sum_{\alpha=3}^n K_{1\alpha}^2 + \sum_{\alpha=3}^n \left(\lambda_\alpha - \frac{1}{2}\right)^4 \\
			&\leq 2K_{12}(\lambda_2-1)\lambda_2 + \frac{1}{2}K_{12} + \sum_{\alpha=3}^n K_{1\alpha}^2 + \sum_{\alpha=3}^n \left(\lambda_\alpha - \frac{1}{2}\right)^4 \\
			&\leq K_{12}^2 + (\lambda_2-1)^2\lambda_{2}^2 + \frac{1}{2}K_{12} + \sum_{\alpha=3}^n K_{1\alpha}^2 + \left[\sum_{\alpha=3}^n\left(\lambda_\alpha - \frac{1}{2}\right)^2\right]^2 \\
			&= \sum_{\alpha=2}^n K_{1\alpha}^2 + \left[(\lambda_2-1)^2\lambda_{2}^2 + (1-\lambda_2)^2\lambda_{2}^2\right] + \frac{1}{2}K_{12}.
		\end{align*}
		Due to the boundedness of the Ricci curvature, we have
		\begin{equation}\label{eq:K1a-square}
		\begin{aligned}
			\sum_{\alpha=2}^n K_{1\alpha}^2 &= \frac{1}{f^2}\sum_{\alpha=2}^n\left[\nabla_{\nabla f}R_{\alpha\alpha} + \lambda_\alpha\left(\frac{1}{2} - \lambda_\alpha\right)\right]^2 \\
			&\leq \frac{2}{f^2}\left[\sum_{\alpha=2}^n\left(\nabla_{\nabla f}R_{\alpha\alpha}\right)^2 + \sum_{\alpha=2}^n\lambda_\alpha^2\left(\frac{1}{2} - \lambda_\alpha\right)^2\right] \\
			&\leq 2\frac{|\nabla \mathrm{Ric}|^2}{f} + \frac{2}{f^2}\sum_{\alpha=2}^n\lambda_\alpha^2\left(\frac{1}{2} - \lambda_\alpha\right)^2 \\
			&\leq 2\frac{|\nabla \mathrm{Ric}|^2}{f} + \frac{2}{f^2}\left[\lambda_2^2\left(\frac{1}{2} - \lambda_2\right)^2 + \sum_{\alpha=3}^n\lambda_\alpha^2\lambda_2(1-\lambda_2)\right] \\
			&\leq 2\frac{|\nabla \mathrm{Ric}|^2}{f} + \frac{2}{f^2}\lambda_2\left[\lambda_2\left(\frac{1}{2} - \lambda_2\right)^2 + \left(\frac{n-2}{4} - \lambda_2\right)(1-\lambda_2)\right] \\
			&\leq 2\frac{|\nabla \mathrm{Ric}|^2}{f} + \frac{C}{f^2}(\lambda_1+\lambda_2),
		\end{aligned}
	\end{equation}
Hence, combined with the fact $\lambda_1+\lambda_2 \to 0$ at infinity, we see
		on $M \setminus D(a)$ for some constant $C$ and sufficiently large $a$.
		\[
	2\sum_{\alpha=2}^n K_{1\alpha}\left(\lambda_\alpha - \frac{1}{2}\right)^2 \leq 2\frac{|\nabla \mathrm{Ric}|^2}{f} + o(1)(\lambda_1+\lambda_2) + \frac{1}{2}K_{12}.	
		\]
		In all, we obtain
		\[
		II \leq 2\frac{|\nabla \mathrm{Ric}|^2}{f} + o(1)(\lambda_1+\lambda_2) + \frac{1}{2}K_{12}.
		\]
	\end{proof}
	
	\begin{claim}\label{claim:III}
		\begin{equation*}\label{eq:claim-III}
			\begin{aligned}
				III &:= \sum_{\substack{\alpha,\beta=2 \\ \alpha\neq\beta}}^n \frac{1}{(n-3)f}\left[\left(\frac{1}{2} - \lambda_{\alpha}\right) + \left(\frac{1}{2} - \lambda_{\beta}\right)\right]\lambda_\alpha\lambda_\beta \\
				&\quad - 2\sum_{\substack{\alpha,\beta=2 \\ \alpha\neq\beta}}^n \frac{\left(\frac{1}{2} - \lambda_{\alpha}\right)^2 + \left(\frac{1}{2} - \lambda_{\beta}\right)^2}{(n-3)f}\lambda_\alpha\lambda_\beta \\
				&\quad - 2\sum_{\substack{\alpha,\beta=2 \\ \alpha\neq\beta}}^n \frac{\left(\frac{1}{2} - \lambda_{\alpha}\right)\left(\frac{1}{2} - \lambda_{\beta}\right)}{f}\lambda_\alpha\lambda_\beta \\
				&\leq \frac{C_2}{f}(\lambda_{1}+\lambda_{2}).
			\end{aligned}
		\end{equation*}
	\end{claim}
	
	\begin{proof}[Proof of Claim \ref{claim:III}]
		We rewrite the first two terms of $III$ as follows:
		\begin{align*}
			&\frac{2}{(n-3)f}\sum_{\substack{\alpha,\beta=2 \\ \alpha\neq\beta}}^n\left(\frac{1}{2} - \lambda_{\alpha}\right)\lambda_\alpha\lambda_\beta
			- \frac{4}{(n-3)f}\sum_{\substack{\alpha,\beta=2 \\ \alpha\neq\beta}}^n\left(\frac{1}{2} - \lambda_{\alpha}\right)^2\lambda_\alpha\lambda_\beta \\
			&= \frac{2}{(n-3)f}\sum_{\alpha=2}^n\left(\frac{1}{2} - \lambda_{\alpha}\right)\lambda_\alpha\left(\frac{n-2}{2} - \lambda_\alpha\right)
			- \frac{4}{(n-3)f}\sum_{\alpha=2}^n\left(\frac{1}{2} - \lambda_{\alpha}\right)^2\lambda_\alpha\left(\frac{n-2}{2} - \lambda_\alpha\right) \\
			&= \frac{2}{(n-3)f}\sum_{\alpha=2}^n\left(\frac{1}{2} - \lambda_{\alpha}\right)\lambda_\alpha\left(\frac{n-2}{2} - \lambda_\alpha\right)(-1 + 2\lambda_{\alpha}) \\
			&= \frac{4}{(n-3)f}\left[-\frac{n-3}{2}\sum_{\alpha=2}^n\lambda_\alpha\left(\frac{1}{2} - \lambda_{\alpha}\right)^2 + \frac{n-3}{4}\sum_{\alpha=2}^n\lambda_\alpha\left(\frac{1}{2} - \lambda_{\alpha}\right) + \sum_{\alpha=2}^n\lambda_\alpha^2\left(\frac{1}{2} - \lambda_{\alpha}\right)^2\right].
		\end{align*}
		Note that $\sum_{\alpha=2}^n\lambda_\alpha\left(\frac{1}{2} - \lambda_{\alpha}\right) = 0$ since
		\[
		\sum_{\alpha=2}^n\lambda_\alpha = R = 2|\mathrm{Ric}|^2 = 2\sum_{\alpha=2}^n\lambda_\alpha^2.
		\]
		It follows from \eqref{l345} and \eqref{l345'} that
		\begin{align*}
			\left|\sum_{\alpha=2}^n\lambda_\alpha\left(\frac{1}{2} - \lambda_{\alpha}\right)^2\right|
			&\leq \lambda_2\left(\frac{1}{2} - \lambda_2\right)^2 + \lambda_2(1-\lambda_2)\left(\frac{n-2}{2} - \lambda_2\right) \\
			&= \lambda_2\left[\left(\frac{1}{2} - \lambda_2\right)^2 + (1-\lambda_2)\left(\frac{n-2}{2} - \lambda_2\right)\right]
		\end{align*}
		and
		\begin{align*}
			\sum_{\alpha=2}^n\lambda_\alpha^2\left(\frac{1}{2} - \lambda_{\alpha}\right)^2
			&\leq \lambda_2^2\left(\frac{1}{2} - \lambda_2\right)^2 + \lambda_2(1-\lambda_2)\left(\frac{n-2}{4} - \lambda_2^2\right) \\
			&= \lambda_2\left[\lambda_2\left(\frac{1}{2} - \lambda_2\right)^2 + (1-\lambda_2)\left(\frac{n-2}{4} - \lambda_2^2\right)\right].
		\end{align*}
		Hence,
		\[
		\frac{2}{(n-3)f}\sum_{\substack{\alpha,\beta=2 \\ \alpha\neq\beta}}^n\left(\frac{1}{2} - \lambda_{\alpha}\right)\lambda_\alpha\lambda_\beta
		- \frac{4}{(n-3)f}\sum_{\substack{\alpha,\beta=2 \\ \alpha\neq\beta}}^n\left(\frac{1}{2} - \lambda_{\alpha}\right)^2\lambda_\alpha\lambda_\beta
		\leq \frac{C}{f}(\lambda_{1}+\lambda_{2})
		\]
		for some constant $C$, since the Ricci curvature is nonnegative and bounded.
		
		We consider the third term of $III$ as follows:
		\begin{align*}
			&-\sum_{\substack{\alpha,\beta=2 \\ \alpha\neq\beta}}^n\left(\frac{1}{2} - \lambda_{\alpha}\right)\left(\frac{1}{2} - \lambda_{\beta}\right)\lambda_\alpha\lambda_\beta \\
			&= -\frac{1}{2}\sum_{\substack{\alpha,\beta=2 \\ \alpha\neq\beta}}^n\left(\frac{1}{2} - \lambda_{\alpha}\right)\lambda_\alpha\lambda_\beta
			+ \sum_{\substack{\alpha,\beta=2 \\ \alpha\neq\beta}}^n\left(\frac{1}{2} - \lambda_{\alpha}\right)\lambda_\alpha\lambda_\beta^2 \\
			&= -\frac{1}{2}\sum_{\alpha=2}^n\left(\frac{1}{2} - \lambda_{\alpha}\right)\lambda_\alpha\left(\frac{n-2}{2} - \lambda_\alpha\right)
			+ \sum_{\alpha=2}^n\left(\frac{1}{2} - \lambda_{\alpha}\right)\lambda_\alpha\left(\frac{n-2}{4} - \lambda_\alpha^2\right) \\
			&= -\frac{n-2}{4}\sum_{\alpha=2}^n\left(\frac{1}{2} - \lambda_{\alpha}\right)\lambda_\alpha
			+ \frac{1}{2}\sum_{\alpha=2}^n\left(\frac{1}{2} - \lambda_{\alpha}\right)\lambda_\alpha^2
			+ \frac{n-2}{4}\sum_{\alpha=2}^n\left(\frac{1}{2} - \lambda_{\alpha}\right)\lambda_\alpha
			- \sum_{\alpha=2}^n\left(\frac{1}{2} - \lambda_{\alpha}\right)\lambda_\alpha^3 \\
			&= \sum_{\alpha=2}^n\lambda_\alpha^2\left(\frac{1}{2} - \lambda_{\alpha}\right)^2 \\
			&\leq \lambda_2\left[\lambda_2\left(\frac{1}{2} - \lambda_2\right)^2 + (1-\lambda_2)\left(\frac{n-2}{4} - \lambda_2^2\right)\right],
		\end{align*}
		which implies that
		\[
		-\frac{2}{f}\sum_{\substack{\alpha,\beta=2 \\ \alpha\neq\beta}}^n\left(\frac{1}{2} - \lambda_{\alpha}\right)\left(\frac{1}{2} - \lambda_{\beta}\right)\lambda_\alpha\lambda_\beta
		\leq \frac{C}{f}(\lambda_{1}+\lambda_{2})
		\]
		for some constant $C$, since the Ricci curvature is nonnegative and bounded. Therefore,
		\[
		III \leq \frac{C}{f}(\lambda_{1}+\lambda_{2}).
		\]
	\end{proof}
	
	
	\begin{claim}\label{claim:IV}
		\begin{equation*}\label{eq:claim-IV}
		\begin{aligned}
			IV := &2\sum_{\substack{\alpha,\beta=2 \\ \alpha\neq\beta}}^n W^{\Sigma}_{\alpha\beta}\lambda_\alpha\lambda_\beta\\
			 \leq& \frac{5n-8}{4}\left[\epsilon(\lambda_1+\lambda_2) + \frac{1}{\epsilon}|W^{\Sigma}|^2\right]
		\end{aligned}
		\end{equation*}
		for any constant $\epsilon > 0$ satisfying $\frac{5n-8}{4}\epsilon \leq \frac{1}{4(n-3)}$.
	\end{claim}
	
	\begin{proof}[Proof of Claim \ref{claim:IV}]
		We rewrite the term $IV$ as follows:
		\begin{align*}
			IV &= 4W^{\Sigma}_{23}\lambda_{2}\lambda_{3} + 4W^{\Sigma}_{24}\lambda_{2}\lambda_{4} + \cdots + 4W^{\Sigma}_{2n}\lambda_{2}\lambda_{n} \\
			&\quad + 4W^{\Sigma}_{34}\lambda_{3}\lambda_{4} + 4W^{\Sigma}_{35}\lambda_{3}\lambda_{5} + \cdots + 4W^{\Sigma}_{(n-1)n}\lambda_{n-1}\lambda_{n} \\
			&= 4\lambda_{2}(W^{\Sigma}_{23}\lambda_{3} + W^{\Sigma}_{24}\lambda_{4} + \cdots + W^{\Sigma}_{2n}\lambda_{n}) \\
			&\quad + 2\lambda_3(W^{\Sigma}_{34}\lambda_{4} + W^{\Sigma}_{35}\lambda_{5} + \cdots + W^{\Sigma}_{3n}\lambda_{n}) \\
			&\quad + 2\lambda_4(W^{\Sigma}_{43}\lambda_{3} + W^{\Sigma}_{45}\lambda_{5} + \cdots + W^{\Sigma}_{4n}\lambda_{n}) \\
			&\quad + \cdots + 2\lambda_n(W^{\Sigma}_{n3}\lambda_{3} + W^{\Sigma}_{n4}\lambda_{4} + \cdots + W^{\Sigma}_{n(n-1)}\lambda_{n-1}).
		\end{align*}
		Using the fact $\sum_{\alpha=2}^n W^{\Sigma}_{3\alpha} = 0$, we have
		\begin{align*}
			&2\lambda_3(W^{\Sigma}_{34}\lambda_{4} + W^{\Sigma}_{35}\lambda_{5} + \cdots + W^{\Sigma}_{3n}\lambda_{n}) \\
			&= 2\lambda_3\left[W^{\Sigma}_{34}\left(\lambda_{4} - \frac{1}{2}\right) + W^{\Sigma}_{35}\left(\lambda_{5} - \frac{1}{2}\right) + \cdots + W^{\Sigma}_{3n}\left(\lambda_{n} - \frac{1}{2}\right) - \frac{1}{2}W^{\Sigma}_{32}\right] \\
			&\leq 2\left(|W^{\Sigma}_{34}|^2 + |W^{\Sigma}_{35}|^2 + \cdots + |W^{\Sigma}_{3n}|^2\right)^{\frac{1}{2}}\left[\left(\lambda_{4} - \frac{1}{2}\right)^2 + \left(\lambda_{5} - \frac{1}{2}\right)^2 + \cdots + \left(\lambda_{n} - \frac{1}{2}\right)^2\right]^{\frac{1}{2}} \\
			&\quad - W^{\Sigma}_{23}\lambda_{3} \\
			&\leq 2|W^{\Sigma}|\lambda_2^{\frac{1}{2}} - W^{\Sigma}_{23}\lambda_{3} \\
			&\leq \epsilon\lambda_2 + \frac{1}{\epsilon}|W^{\Sigma}|^2 - W^{\Sigma}_{23}\lambda_{3}
		\end{align*}
		for any $\epsilon > 0$, where we used the fact
		\[
		\left(\lambda_{4} - \frac{1}{2}\right)^2 + \left(\lambda_{5} - \frac{1}{2}\right)^2 + \cdots + \left(\lambda_{n} - \frac{1}{2}\right)^2 \leq \lambda_2(1-\lambda_2) \leq \lambda_2
		\]
		in the second inequality. Similarly,
		\[
		2\lambda_4(W^{\Sigma}_{34}\lambda_{3} + W^{\Sigma}_{45}\lambda_{5} + \cdots + W^{\Sigma}_{3n}\lambda_{n}) \leq \epsilon\lambda_2 + \frac{1}{\epsilon}|W^{\Sigma}|^2 - W^{\Sigma}_{24}\lambda_{4},
		\]
		\[
		2\lambda_n(W^{\Sigma}_{3n}\lambda_{3} + W^{\Sigma}_{4n}\lambda_{4} + \cdots + W^{\Sigma}_{(n-1)n}\lambda_{n-1}) \leq \epsilon\lambda_2 + \frac{1}{\epsilon}|W^{\Sigma}|^2 - W^{\Sigma}_{2n}\lambda_{n}.
		\]
		Therefore,
		\[
		IV \leq 4\lambda_{2}\sum_{\alpha=3}^n W^{\Sigma}_{2\alpha}\lambda_{\alpha} + (n-2)\left(\epsilon\lambda_2 + \frac{1}{\epsilon}|W^{\Sigma}|^2\right) - \sum_{\alpha=3}^n W^{\Sigma}_{2\alpha}\lambda_{\alpha}.
		\]
		
		Next, we deal with the term $\sum_{\alpha=3}^n W^{\Sigma}_{2\alpha}\lambda_{\alpha}$. Since the Weyl curvature is trace-free, we have
		\[
		\sum_{\alpha=3}^n W^{\Sigma}_{2\alpha} = \sum_{\alpha=3}^n W^{\Sigma}_{2\alpha 2\alpha} = 0,
		\]
		and then
		\begin{equation}\label{eq:w2a}
			\begin{aligned}
				&|W^{\Sigma}_{23}\lambda_{3} + W^{\Sigma}_{24}\lambda_{4} + \cdots + W^{\Sigma}_{2n}\lambda_{n}| \\
				&= \left|W^{\Sigma}_{23}\left(\lambda_{3} - \frac{1}{2}\right) + W^{\Sigma}_{24}\left(\lambda_{4} - \frac{1}{2}\right) + \cdots + W^{\Sigma}_{2n}\left(\lambda_{n} - \frac{1}{2}\right)\right| \\
				&\leq |W^{\Sigma}|\left[\left(\lambda_{3} - \frac{1}{2}\right)^2 + \left(\lambda_{4} - \frac{1}{2}\right)^2 + \cdots + \left(\lambda_{n} - \frac{1}{2}\right)^2\right]^{\frac{1}{2}} \\
				&= |W^{\Sigma}|[\lambda_{2}(1-\lambda_2)]^{\frac{1}{2}} \\
				&\leq |W^{\Sigma}|\lambda_{2}^{\frac{1}{2}} \\
				&\leq \frac{1}{2}\left(\epsilon\lambda_2 + \frac{1}{\epsilon}|W^{\Sigma}|^2\right).
			\end{aligned}
		\end{equation}
		Therefore,
		\begin{align*}
			IV &\leq 2\lambda_{2}\left(\epsilon\lambda_2 + \frac{1}{\epsilon}|W^{\Sigma}|^2\right) + (n-2)\left(\epsilon\lambda_2 + \frac{1}{\epsilon}|W^{\Sigma}|^2\right) + \frac{1}{2}\left(\epsilon\lambda_2 + \frac{1}{\epsilon}|W^{\Sigma}|^2\right) \\
			&\leq \frac{2n+1}{2}\left(\epsilon\lambda_2 + \frac{1}{\epsilon}|W^{\Sigma}|^2\right),
		\end{align*}
		due to the fact $(n-1)\lambda_{2} \leq R = \frac{n-2}{2}$.
	\end{proof}
	
	Consequently, from Claims \ref{claim:I}--\ref{claim:IV} and the fact that $\lambda_1+\lambda_2 \to 0$ at infinity, we obtain
	\begin{align*}
		|\nabla \mathrm{Ric}|^2 &= I + II + III + IV \\
		&\leq -\frac{3}{n-3}(\lambda_{1}+\lambda_{2}) + \frac{6}{n-3}(\lambda_{1}+\lambda_{2})^2 - \frac{4}{n-3}(\lambda_{1}+\lambda_{2})^3 + 4(\lambda_{1}+\lambda_{2})^{\frac{3}{2}} \\
		&\quad + 2\frac{|\nabla \mathrm{Ric}|^2}{f} + o(1)(\lambda_1+\lambda_2) + \frac{1}{2}K_{12} + \frac{C}{f}(\lambda_{1}+\lambda_{2}) \\
		&\quad + \frac{2n+1}{2}\left(\epsilon\lambda_2 + \frac{1}{\epsilon}|W^{\Sigma}|^2\right) \\
		&\leq 2\frac{|\nabla \mathrm{Ric}|^2}{f} - \frac{2}{n-3}(\lambda_1+\lambda_2) + \frac{1}{2}K_{12} + C|W^{\Sigma}|^2
	\end{align*}
	on $M \setminus D(a)$ for some constant $C$, sufficiently large $a$, and small $\epsilon$ satisfying $\frac{2n+1}{2}\epsilon \leq \frac{1}{4(n-3)}$.
	
	Hence,
	\[
	|\nabla \mathrm{Ric}|^2 \leq -\frac{1}{n-3}(\lambda_1+\lambda_2) + K_{12} + C|W^{\Sigma}|^2
	\]
	on $M \setminus D(a)$, where $a > 0$ is sufficiently large. This completes the proof of Proposition \ref{prop:key-estimate}.
\end{proof}

\section{Proof of Theorem \ref{thm:main}}\label{sec:proof-main}

By the standard decomposition of the Riemann curvature operator, for any $(M^n, g)$ with $n \geq 4$,
\begin{equation*}\label{eq:curvature-decomposition}
	|Rm|^2 = \frac{4}{n-2}|Ric|^2 - \frac{2}{(n-1)(n-2)}R^2 + |W|^2.
\end{equation*}

Since $Ric^\Sigma = Ric - \frac{\nabla_{\nabla f}Ric}{f} \geq 0$ by \eqref{ricci-curvature-level-set} and assumption (1) in Theorem \ref{thm:main}, the Cheeger--Gromoll splitting theorem \cite{Cheeger-Gromoll} implies that $\Sigma(s)$ isometrically splits as $\mathbb{S}^{n-2}(\Sigma(s)) \times \mathbb{S}^1(\Sigma(s))$. Consequently,
\[
|Rm^{\Sigma(s)}| = |Rm^{\mathbb{S}^{n-2}(\Sigma(s))}|, \quad |Ric^{\Sigma(s)}| = |Ric^{\mathbb{S}^{n-2}(\Sigma(s))}|, \quad R^{\Sigma(s)} = R^{\mathbb{S}^{n-2}(\Sigma(s))}.
\]

Since
\begin{equation*}\label{eq:curvature-level-set}
	|Rm^{\Sigma(s)}|^2 = \frac{4}{n-3}|Ric^{\Sigma(s)}|^2 - \frac{2}{(n-2)(n-3)}(R^{\Sigma(s)})^2 + |W^{\Sigma(s)}|^2
\end{equation*}
and
\begin{equation*}\label{eq:curvature-sphere}
	|Rm^{\mathbb{S}^{n-2}(\Sigma(s))}|^2 = \frac{4}{n-4}|Ric^{\mathbb{S}^{n-2}(\Sigma(s))}|^2 - \frac{2}{(n-3)(n-4)}(R^{\mathbb{S}^{n-2}(\Sigma(s))})^2 + |W^{\mathbb{S}^{n-2}(\Sigma(s))}|^2,
\end{equation*}
we arrive at the following identity:
\begin{align*}
	&\frac{4}{n-3}\left(\frac{n-2}{4} + \left|\frac{\nabla_{\nabla f}Ric}{f}\right|^2\right) - \frac{2}{(n-2)(n-3)} \cdot \left(\frac{n-2}{2}\right)^2 + |W^{\Sigma(s)}|^2 \\
	&= \frac{4}{n-4}\left(\frac{n-2}{4} + \left|\frac{\nabla_{\nabla f}Ric}{f}\right|^2\right) - \frac{2}{(n-3)(n-4)} \cdot \left(\frac{n-2}{2}\right)^2 + |W^{\mathbb{S}^{n-2}(\Sigma(s))}|^2.
\end{align*}
Through careful calculation, it is easy to derive that
\begin{equation*}\label{eq:weyl-comparison}
	\begin{aligned}
		|W^{\Sigma(s)}|^2 &= |W^{\mathbb{S}^{n-2}(\Sigma(s))}|^2 + \left(\frac{4}{n-4} - \frac{4}{n-3}\right)\left|\frac{\nabla_{\nabla f}Ric}{f}\right|^2 \\
		&\leq |W^{\mathbb{S}^{n-2}(\Sigma(s))}|^2 + \frac{4}{(n-3)(n-4)}\frac{|\nabla Ric|^2}{f}.
	\end{aligned}
\end{equation*}

Since $(M^n, g, f)$ smoothly converges to $\mathbb{R}^2 \times \mathbb{S}^{n-2}$, the level sets of $f$ converge to $\mathbb{R} \times \mathbb{S}^{n-2}$, i.e., $\mathbb{S}^{n-2}(\Sigma(s))$ converges to the standard $\mathbb{S}^{n-2}$ with scalar curvature $\frac{n-2}{2}$. Then we can apply Theorem \ref{thm:weyl-curvature-estimate} to obtain
\begin{equation*}\label{eq:weyl-sphere-estimate}
	\int_{\mathbb{S}^{n-2}(\Sigma(s))}|W^{\mathbb{S}^{n-2}(\Sigma(s))}|^2 \leq C\int_{\mathbb{S}^{n-2}(\Sigma(s))}|(\mathring{Ric})^{\mathbb{S}^{n-2}(\Sigma(s))}|^2.
\end{equation*}
Now we can estimate the integral of the $L^2$ norm of the Weyl curvature as follows:
\begin{equation}\label{eq:weyl-integral-estimate}
	\begin{aligned}
	\int_{\Sigma(s)}|W^{\Sigma(s)}|^2 &= \int_{\mathbb{S}^{n-2}(\Sigma(s)) \times \mathbb{S}^1(\Sigma(s))}|W^{\mathbb{S}^{n-2}(\Sigma(s)) \times \mathbb{S}^1(\Sigma(s))}|^2 \notag \\
	&\leq \int_{\mathbb{S}^1(\Sigma(s))}\int_{\mathbb{S}^{n-2}(\Sigma(s))}\left(|W^{\mathbb{S}^{n-2}(\Sigma(s))}|^2 + \frac{4}{(n-3)(n-4)}\frac{|\nabla Ric|^2}{f}\right) \notag \\
	&\leq C\int_{\mathbb{S}^1(\Sigma(s))}\int_{\mathbb{S}^{n-2}(\Sigma(s))}\left(|(\mathring{Ric})^{\mathbb{S}^{n-2}(\Sigma(s))}|^2 + \frac{|\nabla Ric|^2}{f}\right) \notag \\
	&= C\int_{\mathbb{S}^1(\Sigma(s))}\int_{\mathbb{S}^{n-2}(\Sigma(s))} \left(|Ric^{\mathbb{S}^{n-2}(\Sigma(s))}|^2 - \frac{1}{n-2}(R^{\mathbb{S}^{n-2}(\Sigma(s))})^2 + \frac{|\nabla Ric|^2}{f}\right) \notag \\
	&= C \int_{\Sigma(s)}\left(|Ric^{\Sigma(s)}|^2 - \frac{1}{n-2}(R^{\Sigma(s)})^2 + \frac{|\nabla Ric|^2}{f}\right) \notag \\
	&= C\int_{\Sigma(s)} \left(\frac{n-2}{4} + \left|\frac{\nabla_{\nabla f}Ric}{f}\right|^2 - \frac{1}{n-2} \cdot \frac{(n-2)^2}{4} + \frac{|\nabla Ric|^2}{f}\right) \notag \\
	&\leq C\int_{\Sigma(s)} \frac{|\nabla Ric|^2}{f}.
\end{aligned}
\end{equation}
\begin{prop}\label{prop:ricci-gradient-estimate}
	Let $(M^n, g, f)$ be an $n$-dimensional shrinking gradient Ricci soliton with $R = \frac{n-2}{2}$. Suppose the assumptions in Theorem \ref{thm:main} hold. Then
	\begin{equation*}\label{eq:ricci-gradient-integral}
		\int_{\Sigma(s)}|\nabla Ric|^2 \leq -\frac{1.1}{n-3}\int_{\Sigma(s)}(\lambda_1+\lambda_2) + \frac{1.1}{s}\int_{\Sigma(s)}\langle\nabla (\lambda_1+\lambda_2), \nabla f \rangle
	\end{equation*}
	for sufficiently large $s$ almost everywhere.
\end{prop}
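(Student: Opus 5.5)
We integrate the pointwise bound of Proposition \ref{prop:key-estimate} over the level set $\Sigma(s)$ and handle its two non-sign-definite terms: the $K_{12}$ term, via Lemma \ref{sectional-curvature}, and the Weyl term, via the estimate \eqref{eq:weyl-integral-estimate}. The Weyl term is then absorbed into the left-hand side for $s$ large.

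\textbf{Step 1 (integrate the pointwise bound).} Proposition \ref{prop:key-estimate} gives, on $\Sigma(s)$ with $s$ large,
\[
\int_{\Sigma(s)}|\nabla Ric|^2\le -\frac{1}{n-3}\int_{\Sigma(s)}(\lambda_1+\lambda_2)+\int_{\Sigma(s)}K_{12}+C\int_{\Sigma(s)}|W^{\Sigma}|^2 .
\]

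\textbf{Step 2 (rewrite $K_{12}$).} Since $\lambda_1=0$, Lemma \ref{sectional-curvature} gives
\[
K_{12}=\frac{\nabla_{\nabla f}R_{22}+\lambda_2(\tfrac12-\lambda_2)}{f}.
\]
We need to replace $\nabla_{\nabla f}R_{22}$ by $\langle\nabla\lambda_2,\nabla f\rangle$. Where $\lambda_2$ is a simple eigenvalue this holds by first-order perturbation, since $\nabla_{\nabla f}R_{22}=\nabla f(\lambda_2)$ when $e_2$ is an eigenvector. In general $\lambda_2$, being the smallest eigenvalue on $T\Sigma$, is Lipschitz, so the statement is only claimed almost everywhere. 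At points of differentiability the min-characterization still gives $\langle\nabla\lambda_2,\nabla f\rangle=\nabla_{\nabla f}R_{22}$ for a suitable eigenvector $e_2$. On $\Sigma(s)$ we have $f=s$, so
\[
\int_{\Sigma(s)}K_{12}=\frac1s\int_{\Sigma(s)}\langle\nabla(\lambda_1+\lambda_2),\nabla f\rangle+\frac1s\int_{\Sigma(s)}\lambda_2\Bigl(\tfrac12-\lambda_2\Bigr).
\]
Because $\lambda_2\to0$ at infinity and $0\le\lambda_2\le\tfrac12$, the last term is at most $\frac{C}{s}\int_{\Sigma(s)}(\lambda_1+\lambda_2)$, which is $o(1)\int_{\Sigma(s)}(\lambda_1+\lambda_2)$.

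\textbf{Step 3 (absorb the Weyl term).} By \eqref{eq:weyl-integral-estimate},
\[
C\int_{\Sigma(s)}|W^\Sigma|^2\le \frac{C'}{s}\int_{\Sigma(s)}|\nabla Ric|^2 .
\]
For $s$ large this is at most a small multiple $\delta\int_{\Sigma(s)}|\nabla Ric|^2$, and we move it to the left. This yields
\[
(1-\delta)\int_{\Sigma(s)}|\nabla Ric|^2\le -\Bigl(\frac{1}{n-3}-o(1)\Bigr)\int_{\Sigma(s)}(\lambda_1+\lambda_2)+\frac1s\int_{\Sigma(s)}\langle\nabla(\lambda_1+\lambda_2),\nabla f\rangle .
\]

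\textbf{Step 4 (match the constants).} Dividing by $1-\delta$ gives coefficients $(1-\delta)^{-1}\le 1.1$ in front of the gradient term and $\frac{1-o(1)}{(1-\delta)(n-3)}$ in front of $-\int(\lambda_1+\lambda_2)$. There is a subtlety: a constant of $1.1$ in front of a negative term is stronger, not weaker, than $1$. We choose $\delta$ and the $o(1)$ small enough that $\frac{1-o(1)}{1-\delta}\ge1$ would be needed, which is impossible as set up. To get around this, we use the slack from the proof of Proposition \ref{prop:key-estimate}, where the coefficient before halving was $-\frac{2}{n-3}$ and there was margin from $-\frac{3}{n-3}$. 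Rerunning that proof with $\epsilon$ and $a$ tuned gives a pointwise coefficient of $-\frac{1.2}{n-3}$ before dividing by $1-\delta$. We also note that the gradient term has no fixed sign, so the factor $1.1$ on it must be produced exactly: we arrange $1-\delta=1/1.1$ by multiplying through appropriately, and absorb any remaining discrepancy into the $\lambda$-term.

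\textbf{Main obstacle.} The delicate step is Step 2: justifying $\nabla_{\nabla f}R_{22}=\langle\nabla\lambda_2,\nabla f\rangle$ almost everywhere where eigenvalues cross. This is why the proposition is stated only almost everywhere.
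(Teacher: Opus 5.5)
Your proposal is correct and is essentially the paper's proof. You integrate Proposition~\ref{prop:key-estimate} over $\Sigma(s)$, rewrite $K_{12}$ via Lemma~\ref{sectional-curvature} using the a.e.\ identification $\nabla_{\nabla f}R_{22}=\langle\nabla(\lambda_1+\lambda_2),\nabla f\rangle$ (as in Remark~\ref{rem:lipshitz}), and absorb the Weyl term using \eqref{eq:weyl-integral-estimate}.

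Your Step 4 is more careful than the paper's one-line ``by absorption'', and the fix is right, but state it cleanly. Since $\int_{\Sigma(s)}|\nabla Ric|^2\ge 0$, multiply the absorbed inequality by $1.1$ once $s$ is large enough that $1.1(1-\delta)\ge 1$. You cannot literally ``arrange'' $1-\delta=1/1.1$, because $\delta\sim C/s$ is fixed by $s$. The extra margin on the $\lambda$-term then comes from the slack discarded at the end of the proof of Proposition~\ref{prop:key-estimate}. Also, the condition you wrote as ``$\ge 1$'' should read ``$\ge 1.1$''.
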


\begin{remark}\label{rem:lipshitz}
	The function $\lambda_1+\lambda_2$ is only Lipschitz continuous and differentiable almost everywhere. Here $\langle\nabla (\lambda_1+\lambda_2), \nabla f \rangle$ can be understood as follows: At any $p \in M$, choose $\{e_1, e_2\}$ to be the eigenvectors corresponding to the eigenvalues $\{\lambda_1, \lambda_2\}$, where $\lambda_1 \leq \lambda_2$ are the smallest two eigenvalues of the Ricci curvature. Take parallel transport along all geodesics starting from $p$; then we obtain two smooth vector fields $\{e_1, e_2\}$ in a neighborhood of $p$ such that $e_1(p) = e_1$, $e_2(p) = e_2$. It is easy to check that
	\[
	(\nabla_{\nabla f}Ric)(e_1, e_1) + (\nabla_{\nabla f}Ric)(e_2, e_2) = \nabla f \cdot (Ric(e_1, e_1) + Ric(e_2, e_2)) = \langle\nabla f, \lambda_1+\lambda_2\rangle
	\]
	if $\lambda_1+\lambda_2$ is differentiable at $p$.
\end{remark}

\begin{proof}[Proof of Proposition \ref{prop:ricci-gradient-estimate}]
	Recall from Proposition \ref{prop:key-estimate} that
	\[
	|\nabla Ric|^2 \leq -\frac{1}{n-3}(\lambda_1+\lambda_2) + K_{12} + C|W^{\Sigma(s)}|^2.
	\]
	Integrating over $\Sigma(s)$ and using \eqref{eq:weyl-integral-estimate} and Lemma \ref{sectional-curvature}, we obtain
	\begin{align*}
		\int_{\Sigma(s)}|\nabla Ric|^2 &\leq -\frac{1}{n-3}\int_{\Sigma(s)}(\lambda_1+\lambda_2) + C\int_{\Sigma(s)} \frac{|\nabla Ric|^2}{f} \\
		&\quad + \int_{\Sigma(s)} \frac{\nabla f \cdot \nabla(\lambda_1+\lambda_2) + \frac{1}{2}(\lambda_1+\lambda_2) - (\lambda_1+\lambda_2)^2}{f}.
	\end{align*}
	Since $\lambda_1+\lambda_2$ is sufficiently small and $f$ is sufficiently large outside a compact set, by absorption we derive that
	\[
	\int_{\Sigma(s)}|\nabla Ric|^2 \leq -\frac{1.1}{n-3}\int_{\Sigma(s)}(\lambda_1+\lambda_2) + \frac{1.1}{s}\int_{\Sigma(s)}\langle\nabla (\lambda_1+\lambda_2), \nabla f \rangle
	\]
	for sufficiently large $s$.
\end{proof}

\begin{prop}\label{prop:monotonicity}
	Let $(M^n, g, f)$ be an $n$-dimensional shrinking gradient Ricci soliton with $R = \frac{n-2}{2}$. Suppose the assumptions in Theorem \ref{thm:main} hold. Then
	\begin{equation*}\label{eq:monotonicity}
		\int_{\Sigma(s)} \langle\nabla (\lambda_1+\lambda_2), \nabla f \rangle \, d\sigma_{\Sigma(s)} \leq 0
	\end{equation*}
	for sufficiently large $s$ almost everywhere.
\end{prop}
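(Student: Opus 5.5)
The plan is to prove the statement by contradiction: suppose that for a set of large $s$ of positive measure the integral $\int_{\Sigma(s)}\langle\nabla(\lambda_1+\lambda_2),\nabla f\rangle$ is strictly positive. Proposition~\ref{prop:ricci-gradient-estimate} then gives, on such level sets,
\[
0\le \int_{\Sigma(s)}|\nabla Ric|^2 \le -\frac{1.1}{n-3}\int_{\Sigma(s)}(\lambda_1+\lambda_2)+\frac{1.1}{s}\int_{\Sigma(s)}\langle\nabla(\lambda_1+\lambda_2),\nabla f\rangle .
\]
Since $\lambda_1+\lambda_2=\lambda_2\ge 0$ by Lemma~\ref{ricci nonnegative}, this forces
\[
\int_{\Sigma(s)}\lambda_2\le \frac{n-3}{s}\int_{\Sigma(s)}\langle\nabla\lambda_2,\nabla f\rangle .
\]
Set $\phi(s)=\int_{\Sigma(s)}\lambda_2\,|\nabla f|^{-1}d\sigma$, which is the derivative of $\int_{\{f\le s\}}\lambda_2$ by the coarea formula. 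The idea is to turn this pointwise-in-$s$ inequality into an ODE inequality for a weighted version of $\phi$. Along the flow of $\nabla f/|\nabla f|^2$, the level sets evolve with mean curvature $H=\frac{1}{2\sqrt f}$, and the area element grows by $H/|\nabla f|=\frac{1}{2f}$. This gives
\[
\frac{d}{ds}\int_{\Sigma(s)}\lambda_2\,d\sigma=\int_{\Sigma(s)}\frac{\langle\nabla\lambda_2,\nabla f\rangle}{f}+\frac{\lambda_2}{2s}.
\]
Combining this identity with the displayed inequality yields $\frac{d}{ds}\int_{\Sigma(s)}\lambda_2\ge c\,\frac{1}{s}\int_{\Sigma(s)}\lambda_2$ with $c\ge \frac{1}{n-3}\cdot\frac{s}{s}$. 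More carefully, one gets $\frac{d}{ds}\log\int_{\Sigma(s)}\lambda_2\ge \frac{1}{n-3}-o(1)$ on the bad set, which is exponential growth in $s$. Meanwhile $\lambda_2$ is bounded and $|\Sigma(s)|\sim C s^{1/2}$, since the level sets converge to $\mathbb{R}\times\mathbb{S}^{n-2}$ with the circle of length $\sim\sqrt{s}$. Hence $\int_{\Sigma(s)}\lambda_2$ grows at most polynomially, and exponential growth is impossible.

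The point that needs care is that the differential inequality holds only where the integral is positive. The plan handles this by showing the good region propagates: if $\Psi(s):=\int_{\Sigma(s)}\langle\nabla\lambda_2,\nabla f\rangle>0$ on an interval $(s_0,s_1)$, then $\Phi(s)=\int_{\Sigma(s)}\lambda_2$ is increasing there, with $\Phi'\ge \Psi/s$. One would then try to show this interval must extend to $+\infty$; otherwise $\Psi(s_1)=0$ at its right endpoint. For that second possibility, I would use the parabolic-type equation \eqref{elliptic-equation} for $\Delta_f Ric$ restricted to the two smallest eigendirections. The aim is to show that $\Psi$ satisfies a second-order inequality of the form
\[
\Psi'\ge \frac{1}{2}\Psi-\text{(terms controlled by }\int|\nabla Ric|^2+\int\lambda_2\text{)},
\]
so that once positive it stays positive and grows like $e^{s/2}$. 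That again contradicts the boundedness of $|\nabla Ric|$, which follows from smooth convergence, together with the polynomial area growth.

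The main obstacle is deriving this inequality for $\Psi'$. It requires differentiating $\lambda_2$ twice in the $\nabla f$ direction, and $\lambda_2$ is only Lipschitz. I would work with the smooth frame from Remark~\ref{rem:lipshitz} and the sum $(\nabla_{\nabla f}Ric)(e_1,e_1)+(\nabla_{\nabla f}Ric)(e_2,e_2)$. Then I would use \eqref{Ricci derivative} and Lemma~\ref{sectional-curvature} to express the second derivative through $\Delta_f$ and curvature terms, which Lemma~\ref{lambda-estimates} bounds by $C\lambda_2$. Failing this, I would fall back on the simpler growth argument via $\Phi$ alone, restricted to the set where $\Psi>0$.
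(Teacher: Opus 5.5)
You have found the right growth mechanism, but the proposal does not close as written, and the reason is a misdiagnosis. Your inequality does not hold only where $\Psi>0$. Proposition~\ref{prop:ricci-gradient-estimate} holds for almost every large $s$ with no sign hypothesis, and its left-hand side is nonnegative. It therefore gives $\Psi(s)\ge \frac{s}{n-3}\Phi(s)\ge 0$ for a.e.\ large $s$. In particular, on the set where $\Psi\le 0$ it forces $\Phi=\Psi=0$, and this is exactly the information your restricted argument discards.

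Without it, your fallback (the growth argument restricted to $\{\Psi>0\}$) does not reach a contradiction. Exponential growth of $\Phi$ on a set of positive measure is compatible with $\Phi\le C\sqrt s$ if $\Phi$ may decrease on the complement. The identity $\Phi'=\frac1s\Psi+\frac1{2s}\Phi$ allows such a decrease whenever $\Psi<0$. Your primary plan, a second-order inequality $\Psi'\ge\frac12\Psi-\cdots$ derived from \eqref{elliptic-equation}, is unsupported. You give no derivation, and it would require two derivatives of the merely Lipschitz function $\lambda_2$ in the $\nabla f$ direction. It is also unnecessary.

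Once you use the inequality on the whole half-line, the proof takes a few lines. $\Phi$ is Lipschitz in $s$, hence absolutely continuous, and for a.e.\ large $s$
\[
\Phi'(s)=\frac{\Psi(s)}{s}+\frac{\Phi(s)}{2s}\ge\left(\frac{1}{n-3}+\frac{1}{2s}\right)\Phi(s).
\]
So $e^{-s/(n-3)}s^{-1/2}\Phi(s)$ is nondecreasing and nonnegative. It is also at most $Ce^{-s/(n-3)}$, because $\Phi\le \sup\lambda_2\cdot|\Sigma(s)|=C\sqrt s$; the identity $\partial_s\, d\sigma=\frac{1}{2s}d\sigma$ gives the area growth exactly. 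Hence $\Phi\equiv 0$ for large $s$, so $\lambda_2\equiv0$ on $\{f\ge s_0\}$ and $\Psi\equiv 0$.

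This is essentially the paper's argument with a different weight. The paper sets $I(s)=s^{-1/2}\Phi(s)$, for which $I'=s^{-3/2}\Psi$. It uses only the sign $\Psi\ge 0$ from Proposition~\ref{prop:ricci-gradient-estimate}, together with $I(s)\to 0$, which needs $\lambda_1+\lambda_2\to0$ at infinity. It then reaches a contradiction at a level $d$ where $I'(d)<0$. Your exponential weight uses the quantitative bound instead, so it needs only that $\Phi$ grow subexponentially, which follows from boundedness of $\lambda_2$.
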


\begin{proof}
	For this purpose, we consider the following one-parameter family of diffeomorphisms:
	\begin{equation*}\label{eq:diffeomorphism}
		\begin{cases}
			\dfrac{\partial F}{\partial s} = \dfrac{\nabla f}{|\nabla f|^2}, \\[6pt]
			F(x, a) = x \in \Sigma(a).
		\end{cases}
	\end{equation*}
	Then $\frac{\partial f}{\partial s} = \langle\nabla f, \frac{\nabla f}{|\nabla f|^2}\rangle = 1$, and the advantage of $F$ is that it maps level sets of $f$ to other level sets; in particular, $f(F(x, s)) = s$.
	
	Suppose $\{x_1, x_2, \ldots, x_{n-1}\}$ is a local coordinate chart of $\Sigma(a)$. On $\Sigma(s)$, let $g(s)(\frac{\partial}{\partial x_i}, \frac{\partial}{\partial x_j}) := g(\frac{\partial F}{\partial x_i}, \frac{\partial F}{\partial x_j})$ and $d\sigma_{\Sigma(s)} = \sqrt{\det(g_{ij})}\,dx$, where $dx = dx_1 \wedge dx_2 \wedge \cdots \wedge dx_{n-1}$.
	
	Next we compute the derivative of $d\sigma_{\Sigma(s)}$:
	\begin{align*}
		\frac{\partial}{\partial s}d\sigma_{\Sigma(s)} &= \frac{\partial}{\partial s}\sqrt{\det(g_{ij})}\,dx \\
		&= \frac{1}{2} \cdot 2g^{ij}\left\langle \nabla_{\frac{\partial F}{\partial x_i}}\frac{\partial F}{\partial s}, \frac{\partial F}{\partial x_j}\right\rangle d\sigma_{\Sigma(s)} \\
		&= g^{ij}\left\langle \nabla_{\frac{\partial F}{\partial x_i}}\frac{\nabla f}{|\nabla f|^2}, \frac{\partial F}{\partial x_j}\right\rangle d\sigma_{\Sigma(s)} \\
		&= \frac{1}{|\nabla f|^2}g^{ij}\nabla^2 f\left(\frac{\partial F}{\partial x_i}, \frac{\partial F}{\partial x_j}\right)d\sigma_{\Sigma(s)} \\
		&= \frac{1}{|\nabla f|^2} g^{ij}\left(\frac{1}{2}g\left(\frac{\partial F}{\partial x_i}, \frac{\partial F}{\partial x_j}\right) - Ric\left(\frac{\partial F}{\partial x_i}, \frac{\partial F}{\partial x_j}\right)\right)d\sigma_{\Sigma(s)} \\
		&= \frac{1}{|\nabla f|^2}\left(\frac{n-1}{2} - \frac{n-2}{2}\right)d\sigma_{\Sigma(s)} = \frac{1}{2s}d\sigma_{\Sigma(s)}.
	\end{align*}
	Hence, it is easy to check that
	\[
	\frac{\partial}{\partial s}\left(\frac{1}{\sqrt{s}}d\sigma_{\Sigma(s)}\right) = \left(-\frac{1}{2}s^{-\frac{3}{2}} + \frac{1}{\sqrt{s}} \cdot \frac{1}{2s}\right) d\sigma_{\Sigma(s)} = 0.
	\]
	Define
	\[
	I(s) = \int_{\Sigma(s)} (\lambda_1+\lambda_2) \cdot \frac{1}{\sqrt{s}} \, d\sigma_{\Sigma(s)}.
	\]
	It is Lipschitz continuous, hence differentiable almost everywhere, and we can compute its derivative as follows:
	\begin{align*}
		I'(s) &= \frac{d}{ds}\int_{\Sigma(s)} (\lambda_1+\lambda_2) \cdot \frac{1}{\sqrt{s}} \, d\sigma_{\Sigma(s)} \\
		&= \int_{\Sigma(s)} \left\langle \nabla (\lambda_1+\lambda_2), \frac{\nabla f}{|\nabla f|^2}\right\rangle \frac{1}{\sqrt{s}}\,d\sigma_{\Sigma(s)} + \int_{\Sigma(s)} (\lambda_1+\lambda_2)\frac{\partial}{\partial s}\left(\frac{1}{\sqrt{s}}d\sigma_{\Sigma(s)}\right) \\
		&= \int_{\Sigma(s)} \left\langle \nabla (\lambda_1+\lambda_2), \frac{\nabla f}{|\nabla f|^2}\right\rangle \frac{1}{\sqrt{s}}\,d\sigma_{\Sigma(s)} \\
		&= \frac{1}{s^{\frac{3}{2}}}\int_{\Sigma(s)} \langle \nabla (\lambda_1+\lambda_2), \nabla f\rangle \, d\sigma_{\Sigma(s)},
	\end{align*}
	where we used $|\nabla f|^2 = s$ in the last equality.
	
	Moreover, since $I(s)$ tends to zero as $s \to \infty$ by assumption, there exists sufficiently large $b > a$ such that $I'(b) \leq 0$, i.e.,
	\[
	\int_{\Sigma(b)} \langle \nabla(\lambda_1+\lambda_2), \nabla f \rangle \, d\sigma_{\Sigma(b)} \leq 0.
	\]
	Finally, we claim that
	\[
	\int_{\Sigma(s)} \langle\nabla (\lambda_1+\lambda_2), \nabla f \rangle \, d\sigma_{\Sigma(s)} \leq 0
	\]
	for almost every $s \geq b$.
	
	Indeed, if not, assume there exists some $c > b$ such that
	\[
	\int_{\Sigma(c)} \langle\nabla (\lambda_1+\lambda_2), \nabla f \rangle \, d\sigma_{\Sigma(c)} > 0.
	\]
	Similarly, because $I(s)$ tends to zero as $s \to \infty$ by assumption, there exists sufficiently large $d > c$ such that $I'(d) < 0$, i.e.,
	\[
	\int_{\Sigma(d)} \langle\nabla (\lambda_1+\lambda_2), \nabla f \rangle \, d\sigma_{\Sigma(d)} < 0.
	\]
	Then it follows from Proposition \ref{prop:ricci-gradient-estimate} that
	\begin{align*}
		\int_{\Sigma(d)}|\nabla Ric|^2 \, d\sigma_{\Sigma(d)} &\leq -\frac{1.1}{n-3}\int_{\Sigma(d)} (\lambda_1+\lambda_2)\, d\sigma_{\Sigma(d)} + \frac{1.1}{d}\int_{\Sigma(d)} \langle\nabla (\lambda_1+\lambda_2), \nabla f \rangle \, d\sigma_{\Sigma(d)} \\
		&<< 0,
	\end{align*}
	which is a contradiction. This completes the proof of Proposition \ref{prop:monotonicity}.
\end{proof}

\begin{proof}[Proof of Theorem \ref{thm:main}]
	Applying Propositions \ref{prop:ricci-gradient-estimate} and \ref{prop:monotonicity}, we infer
	\[
	\int_{\Sigma(s)}|\nabla Ric|^2 \, d\sigma_{\Sigma(s)} = 0 \quad \text{and} \quad \int_{\Sigma(s)} (\lambda_1+\lambda_2)\, d\sigma_{\Sigma(s)} = 0
	\]
	for sufficiently large $s$ almost everywhere, due to the nonnegativity of $\lambda_1+\lambda_2$. Thus $\nabla Ric = 0$ and $\lambda_1+\lambda_2 = 0$ on $M \setminus D(s)$ by the continuity of $\nabla Ric$ and $\lambda_1+\lambda_2$.
	Since the Ricci curvature is nonnegative, we obtain
	\[
	\lambda_1 = \lambda_2 \equiv 0 \quad \text{and} \quad \lambda_3 = \cdots = \lambda_n \equiv \frac{1}{2}.
	\]
	Due to the analyticity of the gradient Ricci soliton, $\nabla Ric = 0$ on $M$.
	
	Finally, the de Rham splitting theorem implies that $(M^n, g, f)$ is isometric to a finite quotient of $\mathbb{R}^2 \times {N}^{n-2}$, where ${N}^{n-2}$ is an $(n-2)$-dimensional Einstein manifold with Einstein constant $\frac{1}{2}$. Since $(M^n, g, f)$ converges smoothly to $\mathbb{R}^2 \times \mathbb{S}^{n-2}$, the manifold ${N}^{n-2}$ must be isometric to $\mathbb{S}^{n-2}$. This completes the proof of Theorem \ref{thm:main}.
\end{proof}

{\bf Acknowledgments}.
The third author would like to thank Professor Xi-nan Ma for his constant encouragement.

\end{document}